\newcommand{\myleft}{\mathopen{}\mathclose\bgroup\left}
\newcommand{\myright}{\aftergroup\egroup\right}
\newcommand{\norm}[1]{\left|#1\right|}
\newcommand{\set}[1]{\left\{#1\right\}}
\newcommand{\intpart}[1]{\myleft[#1\myright]}
\newcommand{\fracpart}[1]{\myleft\{#1\myright\}}
\DeclareMathOperator{\cont}{cont}
\DeclareMathOperator{\denom}{denom}
\DeclareMathOperator{\lcm}{lcm}
\DeclareMathOperator{\ord}{ord}
\DeclareMathOperator{\rad}{rad}
\newcommand{\NN}{\mathbb{N}}
\newcommand{\ZZ}{\mathbb{Z}}
\newcommand{\QQ}{\mathbb{Q}}
\newcommand{\RR}{\mathbb{R}}
\newcommand{\PP}{\mathbb{P}}
\newcommand{\pp}{\mathfrak{p}}
\newcommand{\dd}{\mathfrak{d}}
\newcommand{\GP}{\mathcal{P}}
\newcommand{\BT}{\widetilde{B}}
\newcommand{\IT}{\mathcal{I}}
\newcommand{\EO}{\mathcal{E}^{\text{odd}}}
\newcommand{\EE}{\mathcal{E}^{\text{even}}}
\theoremstyle{plain}
\newtheorem{theorem}{Theorem}
\newtheorem{lemma}{Lemma}
\newtheorem{conjecture}{Conjecture}
\theoremstyle{definition}
\newtheorem{Table}{Table}
\newcommand{\arXiv}[1]{\href{http://arxiv.org/abs/#1}{\texttt{arXiv:\,#1~[math.NT]}}}
\begin{document}

\title{On a product of certain primes}
\author{Bernd C. Kellner}
\subjclass[2010]{11B83 (Primary), 11B68 (Secondary)}
\address{G\"ottingen, Germany}
\email{bk@bernoulli.org}
\keywords{Product of primes, Bernoulli polynomials, denominator,
sum of base-$p$ digits, $p$-adic valuation of polynomials}

\begin{abstract}
We study the properties of the product, which runs over the primes,
\[
  \mathfrak{p}_n = \prod_{s_p(n) \, \geq \, p} p
    \quad (n \geq 1),
\]
where $s_p(n)$ denotes the sum of the base-$p$ digits of $n$.
One important property is the fact that $\mathfrak{p}_n$
equals the denominator of the Bernoulli polynomial $B_n(x) - B_n$,
where we provide a short $p$-adic proof. Moreover, we consider the
decomposition $\mathfrak{p}_n = \mathfrak{p}_n^- \cdot \mathfrak{p}_n^+$,
where $\mathfrak{p}_n^+$ contains only those primes $p > \sqrt{n}$.
Let $\omega( \cdot )$ denote the number of prime divisors.
We show that $\omega( \mathfrak{p}_n^+ ) < \sqrt{n}$, while we raise the
explicit conjecture that
\[
  \omega( \mathfrak{p}_n^+ ) \, \sim \, \kappa \, \frac{\sqrt{n}}{\log n}
  \quad \text{as $n \to \infty$}
\]
with a certain constant $\kappa > 1$, supported by several computations.
\end{abstract}

\maketitle


\section{Introduction}

Let $\PP$ be the set of primes.
Throughout this paper, $p$ denotes a prime, and $n$ denotes a nonnegative integer.
The function $s_p(n)$ gives the sum of the base-$p$ digits of $n$.
Let $\GP(n)$ denote the greatest prime factor of $n \geq 2$, otherwise $\GP(n)=1$.
An empty product is defined to be $1$.

We study the product of certain primes,
\begin{equation} \label{eq:primes-prod}
  \pp_n := \prod_{\substack{
    p \, \in \, \PP \\
    s_p(n) \, \geq \, p}} p
    \quad (n \geq 1),
\end{equation}
which is restricted by the condition $s_p(n) \geq p$ on each prime factor $p$.
Since $s_p(n) = n$ in case $p > n$, the product~\eqref{eq:primes-prod} is always
finite.

The values $\pp_n$ are of basic interest, as we will see in Section~\ref{sec:bern-poly},
since they are intimately connected with the denominators of the Bernoulli polynomials
and related polynomials.

Theorem~\ref{thm:primes-bound} below supplies sharper bounds on the prime factors
of $\pp_n$. For the next theorem, giving properties of divisibility,
we need to define the squarefree kernel of an integer as follows:
\[
  \rad(n) := \prod_{p \, \mid \, n} p, \quad
  \rad^*(n) := \left\{
    \begin{array}{ll}
      1,       & \text{if $n$ is prime}, \\
      \rad(n), & \text{else}
    \end{array}
  \right.
  \quad (n \geq 1).
\]

\begin{theorem} \label{thm:primes-div}
The sequence $(\pp_n)_{n \geq 1}$ obeys the following divisibility properties:

\begin{enumerate}

\item Any prime $p$ occurs infinitely often:
\[
  n \equiv -1 \pmod{p} \quad (n > p)
  \quad \implies \quad
  p \mid \pp_n.
\]

\item Arbitrarily large intervals of consecutive members exist such that
\[
  p \mid \pp_n
  \quad \implies \quad
  p \mid \pp_{n p^r+b} \quad (0 \leq b < p^r, \, r \geq 1).
\]

\item Arbitrarily many prime factors occur, in particular:
\[
  \rad^*(n+1) \mid \pp_n.
\]

\end{enumerate}
\end{theorem}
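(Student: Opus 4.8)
The plan is to reduce all three parts to two elementary facts about base-$p$ digit sums. First, by the very definition~\eqref{eq:primes-prod}, $p \mid \pp_n$ holds precisely when $s_p(n) \ge p$; and since $\pp_n$ is a product of distinct primes it is squarefree, so it is divisible by a squarefree integer exactly when it is divisible by each prime factor of that integer. Second, appending low-order base-$p$ digits is additive on digit sums: for $0 \le b < p^r$ one has $s_p(mp^r + b) = s_p(m) + s_p(b)$, because $b$ fills the $r$ lowest base-$p$ positions and produces no carry; in particular $s_p(mp^r+b) \ge s_p(m)$.

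For part~(a) I would write $n = mp + (p-1)$; since $n > p$ forces $m \ge 1$, the second fact gives $s_p(n) = s_p(m) + (p-1) \ge 1 + (p-1) = p$, hence $p \mid \pp_n$. Letting $n$ range over the residue class $-1$ modulo $p$ with $n > p$ then yields infinitely many such indices, which is the claim.

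For part~(b), fix a prime $p$, an index $n$ with $s_p(n) \ge p$, and $r \ge 1$; for every $b$ with $0 \le b < p^r$ the second fact gives $s_p(np^r + b) = s_p(n) + s_p(b) \ge p$, so $p$ divides each of the $p^r$ consecutive members $\pp_{np^r}, \dots, \pp_{np^r+p^r-1}$. Combined with the starting index supplied by part~(a) and with $p^r \to \infty$, this produces arbitrarily long runs of consecutive members divisible by $p$. For part~(c), if $n+1$ is prime then $\rad^*(n+1) = 1$ trivially divides $\pp_n$; otherwise $n+1$ is composite, so every prime $p \mid n+1$ can be written $n+1 = pk$ with $k \ge 2$, whence $n = pk - 1 \ge 2p - 1 > p$ and $n \equiv -1 \pmod p$, and part~(a) gives $p \mid \pp_n$. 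Since $\pp_n$ is squarefree this yields $\rad^*(n+1) \mid \pp_n$, and taking $n+1$ to be a primorial makes $\omega(\rad^*(n+1)) = \omega(n+1)$ arbitrarily large.

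None of this is deep; the only points that need genuine care are the carry-free digit identity behind part~(b), which must be justified rather than merely asserted, and, in part~(c), the necessity of treating $n+1$ prime separately — exactly where $\rad^*$ differs from $\rad$ — so that the hypothesis $n > p$ of part~(a) is actually available. Everything else is a short computation with base-$p$ expansions.
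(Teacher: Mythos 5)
Your proposal is correct and follows essentially the same route as the paper: part (a) via the expansion $n = (p-1) + mp$, part (b) via the carry-free additivity $s_p(np^r + b) = s_p(n) + s_p(b)$, and part (c) by reducing to part (a) and taking $n+1$ a product of many distinct primes. The only difference is that you spell out the minor checks (e.g.\ $n = pk-1 > p$ when $n+1$ is composite) that the paper leaves implicit.
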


\begin{theorem}[Kellner and Sondow \cite{Kellner&Sondow:2017}] \label{thm:primes-bound}
If $n \geq 1$, then
\[
  \pp_n = \prod_{ \substack{
    p \, \leq \, \frac{n+1}{\lambda_n} \\
    s_p(n) \, \geq \, p}} p,
\]
where
\[
  \lambda_n := \left\{
    \begin{array}{ll}
      2, & \text{if $n$ is odd,} \\
      3, & \text{if $n$ is even.}
    \end{array}
  \right.
\]
In particular, the divisor $\lambda_n$ is best possible,
respectively the bound $\frac{n+1}{\lambda_n}$ is sharp,
for odd $n=2p-1$ and even $n=3p-1$, when $p$ is an odd prime.
\end{theorem}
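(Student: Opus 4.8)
The plan is to observe that, since $\pp_n = \prod_{s_p(n) \geq p} p$ by definition, the asserted identity is equivalent to the single implication
\[
  s_p(n) \geq p \implies p \leq \frac{n+1}{\lambda_n},
\]
so the whole point is to convert a lower bound on the base-$p$ digit sum of $n$ into a lower bound on $n$ itself. First I would write $n = \sum_{i \geq 0} a_i p^i$ in base $p$, with $0 \leq a_i \leq p-1$, and set $\sigma := s_p(n) = \sum_i a_i$. Since $a_0 \leq p - 1 < p \leq \sigma$, the higher digits cannot all vanish, and using $p^i \geq p$ for $i \geq 1$ one gets
\[
  n \;\geq\; a_0 + p \sum_{i \geq 1} a_i \;=\; p\sigma - (p-1)a_0 \;\geq\; p\sigma - (p-1)^2,
\]
the last step because the middle expression is decreasing in $a_0 \in [0,p-1]$ and the right-hand side is increasing in $\sigma$. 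Taking $\sigma = p$ yields $n \geq 2p-1$, i.e.\ $p \leq (n+1)/2$, which is exactly the bound with $\lambda_n = 2$; this settles the case $n$ odd.

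To gain the extra factor when $n$ is even I would split on $p$. For $p = 2$ the units digit is $a_0 = 0$, and the least even integer $n$ with $s_2(n) \geq 2$ is $n=6$, so $p \leq (n+1)/3$ holds trivially. For an odd prime $p$, every power $p^i$ is odd, hence $n \equiv \sum_i a_i = \sigma \pmod{2}$; thus $n$ even forces $\sigma$ even, and together with $\sigma \geq p$ and $p$ odd this gives $\sigma \geq p+1$. Feeding $\sigma = p+1$ into the displayed estimate yields $n \geq p(p+1) - (p-1)^2 = 3p - 1$, that is $p \leq (n+1)/3 = (n+1)/\lambda_n$, completing the formula. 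This parity step is the one place where something slightly clever happens: it is what upgrades $s_p(n) \geq p$ to $s_p(n) \geq p+1$ for even $n$ and so produces $\lambda_n = 3$, while the prime $p = 2$ — outside the scope of that congruence — has to be handled by the separate count above. I expect this to be the main (and rather modest) obstacle; the rest is the one-line digit estimate.

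For sharpness I would simply exhibit extremal $n$. For an odd prime $p$, the odd number $n = 2p - 1$ has base-$p$ digits $1$ and $p-1$, so $s_p(n) = p$ and hence $p \mid \pp_n$, whereas $\tfrac{n+1}{\lambda_n} = \tfrac{2p}{2} = p$; replacing $2$ by any larger constant would push this bound below $p$ and drop $p$ from the product, so $\lambda_n = 2$ is best possible for odd $n$. Likewise, for an odd prime $p$ the number $n = 3p - 1$ is even, with base-$p$ digits $2$ and $p-1$, so $s_p(n) = p+1 \geq p$ and $p \mid \pp_n$, whereas $\tfrac{n+1}{\lambda_n} = \tfrac{3p}{3} = p$; so $3$ cannot be improved for even $n$, which proves the last assertion.
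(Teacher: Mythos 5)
Your proposal is correct and rests on essentially the same ideas as the paper: an elementary base-$p$ digit estimate showing $s_p(n)\geq p$ forces $n\geq 2p-1$, upgraded for even $n$ by the parity observation ($n\equiv s_p(n)\pmod 2$ for odd $p$, with $p=2$ handled separately) to $n\geq 3p-1$, together with the same extremal examples $n=2p-1$ and $n=3p-1$ for sharpness. The paper merely packages this contrapositively as two interval lemmas (Lemmas~\ref{lem:sp-all-n} and~\ref{lem:sp-even-n}) instead of your single uniform inequality $n\geq p\,s_p(n)-(p-1)^2$, so the difference is organizational rather than substantive.
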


The divisor $\lambda_n$ can be improved by accepting additional conditions.

\begin{theorem} \label{thm:primes-bound2}
The divisor $\lambda_n = 4$ holds in Theorem~\ref{thm:primes-bound},
\begin{enumerate}
  \item if $n = 4$,
  \item if $n \geq 10$ is even and $n \notin \bigcup_{p \geq 5} \set{3p-1, 4p-2}$,
  \item if $n \geq 11$ is odd and $n \notin \bigcup_{p \geq 5} \set{2p-1, 3p-2, 4p-3}$.
\end{enumerate}
\end{theorem}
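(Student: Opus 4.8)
The plan is to restate the conclusion ``the divisor $\lambda_n = 4$ holds in Theorem~\ref{thm:primes-bound}'' in the contrapositive form that is easiest to attack. By the definition~\eqref{eq:primes-prod} together with Theorem~\ref{thm:primes-bound}, the identity $\pp_n = \prod_{p \,\le\, (n+1)/4,\; s_p(n)\,\ge\, p} p$ is equivalent to the assertion that \emph{no} prime $p$ with $p > \frac{n+1}{4}$ (equivalently $n \le 4p-2$) satisfies $s_p(n) \ge p$. So throughout I would fix a hypothetical prime $p$ with $s_p(n) \ge p$ and $n \le 4p - 2$, and show that this forces $n$ into one of the families listed in~(b)/(c), or is altogether impossible when $n = 4$, giving~(a).

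First I would clear away the small primes. For $p = 2$ the constraint $2 > \frac{n+1}{4}$ gives $n \le 6$, and for $p = 3$ it gives $n \le 10$, where the only even value in range, $n = 10$, has $s_3(10) = 2 < 3$; hence under the standing hypotheses ($n$ even and $\ge 10$, or $n$ odd and $\ge 11$) any offending prime satisfies $p \ge 5$, while for $n = 4$ a direct check of $p \in \{2,3\}$ plus the trivial range $p > 4$ shows there is no offending prime at all, which is~(a). For $p \ge 5$ we have $n \le 4p-2 < 4p \le p^2$, so the base-$p$ expansion of $n$ has at most two digits: $n = c_1 p + c_0$ with $0 \le c_0, c_1 < p$ and $s_p(n) = c_1 + c_0$. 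From $n < 4p$ we get $c_1 \le 3$, and $c_1 = 0$ is impossible since it would force $s_p(n) = n < p$; so $c_1 \in \{1,2,3\}$. Solving $c_1 + c_0 \ge p$ with $0 \le c_0 \le p-1$ in each of these three cases, and discarding the solution $n = 4p - 1$ as it violates $n \le 4p - 2$, leaves precisely $n \in \set{2p-1,\; 3p-2,\; 3p-1,\; 4p-3,\; 4p-2}$. A parity check then finishes the job: since $p$ is odd, the values $2p-1,\,3p-2,\,4p-3$ are odd and $3p-1,\,4p-2$ are even, so an odd offending $n$ lies in $\bigcup_{p\ge5}\set{2p-1,3p-2,4p-3}$ and an even offending $n$ lies in $\bigcup_{p\ge5}\set{3p-1,4p-2}$. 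Taking contrapositives yields~(b) and~(c).

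The only real work here is bookkeeping at the boundaries, and that is where I expect the few genuine subtleties to hide: the handling of $p = 2$ and $p = 3$, the boundary values $n = 10$ and $n = 11$ (at $n = 11$ the prime $p = 3$ sits exactly on the threshold $\frac{n+1}{4}$ and is therefore \emph{admissible} rather than offending, which is why $n = 11$ need not be excluded), and the exclusions of $n = 6, 8$ on the even side that motivate the hypothesis $n \ge 10$. Beyond this there is no analytic content: once the two-digit base-$p$ expansion is available, everything reduces to the single inequality $c_1 + c_0 \ge p$ together with $c_1 \le 3$, so the proof is a short finite case analysis. I note that one does not even need the sharper factors $\lambda_n \in \{2,3\}$ from Theorem~\ref{thm:primes-bound} in the core step — the inequality $p > \frac{n+1}{4}$ alone drives the argument — but invoking Theorem~\ref{thm:primes-bound} is the natural way to phrase the refinement.
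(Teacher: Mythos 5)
Your proof is correct, and at its core it performs the same computation as the paper: for a prime $p\ge 5$ with $n<4p$ the number $n$ has a two-digit base-$p$ expansion, and the condition $s_p(n)\ge p$ together with a parity split isolates exactly the families $\set{3p-1,4p-2}$ (even) and $\set{2p-1,3p-2,4p-3}$ (odd). The packaging, however, differs in a way worth noting. The paper works forward: it layers Lemma~\ref{lem:sp-lambda} with $\lambda=2$ and $\lambda=3$ on top of the sharp cases of Theorem~\ref{thm:primes-bound}, peeling off the exceptional leading-digit values one at a time, and it disposes of the small cases ($n\le 9$, part (a), and $n=10$) by consulting Table~\ref{tbl:primes-values}. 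You work in contrapositive form: you fix a hypothetical ``offending'' prime $p>\frac{n+1}{4}$ with $s_p(n)\ge p$, observe $c_1=\intpart{n/p}\in\set{1,2,3}$, and enumerate all solutions $n\in\set{2p-1,3p-2,3p-1,4p-3,4p-2}$ at once (correctly discarding $4p-1$ via $n\le 4p-2$), then split by parity. This makes the proof self-contained — no appeal to Lemma~\ref{lem:sp-lambda}, to the values $\lambda_n\in\set{2,3}$, or to the table — and your boundary bookkeeping is sound: $p=2,3$ cannot offend for $n\ge 11$, the single case $n=10$, $p=3$ is killed by $s_3(10)=2<3$ (which is exactly why $10=4\cdot 3-2$ need not be excluded), and $n=4$ is checked directly. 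The paper's route buys a reusable lemma and a shorter write-up given Theorem~\ref{thm:primes-bound}; yours buys transparency about precisely which $(p,n)$ pairs fail and why the exceptional sets are complete.
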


The optimal divisor $\lambda_n^*$, which could replace $\lambda_n$ in
Theorem~\ref{thm:primes-bound}, obviously satisfies
\[
  \lambda_n \leq \lambda_n^* := \frac{n+1}{\GP(\pp_n)}.
\]

First values of $\pp_n$, $\GP(\pp_n)$, and $\lambda_n^*$ are given in
Table~\ref{tbl:primes-values}.
Supported by some computations, we raise the following conjecture,
which implies an upper bound on $\lambda_n^*$.

\begin{conjecture} \label{conj:lambda-bound}
We have the estimates that
\[
  \GP(\pp_n) > \sqrt{n}, \quad \lambda_n^* < \sqrt{n} \quad (n > 192).
\]
\end{conjecture}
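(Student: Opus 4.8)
The plan is to produce, for each $n > 192$, an explicit prime divisor of $\pp_n$ lying above $\sqrt{n}$. Note first that the two estimates are essentially equivalent: since $\lambda_n^* = (n+1)/\GP(\pp_n)$, the bound $\lambda_n^* < \sqrt{n}$ says $\GP(\pp_n) > (n+1)/\sqrt{n} = \sqrt{n} + 1/\sqrt{n}$, which is the stronger statement and differs from $\GP(\pp_n) > \sqrt{n}$ only for the finitely many $n$ per dyadic range with $\sqrt{n}$ unusually close to an integer from below (such as $n = k^2-1$); so it suffices to find a prime $p \mid \pp_n$ comfortably above $\sqrt{n}$. I would start from the reformulation: for a prime $p > \sqrt{n}$, writing $n = qp + a$ in base $p$ (so $q = \lfloor n/p \rfloor < p$ and $a = n \bmod p$), one has $p \mid \pp_n$ iff $q + a \geq p$, equivalently iff $p \leq (n + m_p)/(m_p + 1)$, where $m_p \in \{1,\dots,p\}$ is the residue of $-n$ mod $p$; in particular $p \mid \pp_n$ whenever $p \mid n+m$ for some $m \geq 1$ with $p \leq (n+m)/(m+1)$. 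The goal then reduces to finding a small $m \geq 1$ such that $n+m$ has a prime factor in a window just above $\sqrt{n}$.

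For $m=1$ this is handled by Theorem~\ref{thm:primes-div}(c): whenever $n+1$ is composite with a prime factor $> \sqrt{n}$, that prime divides $\rad^*(n+1)$, hence $\pp_n$. The residual $n$ — those with $n+1$ prime or $\sqrt{n}$-smooth — I would treat by allowing $m = 2, 3, \dots$: it suffices to find $m$ with $1 \leq m \leq \tfrac13\sqrt{n}$ such that $n+m$ has a prime factor $p \in (\sqrt{n}, 2\sqrt{n}]$, because then the complementary factor $(n+m)/p \geq \tfrac12\sqrt{n} > m+1$ (for $n > 192$) forces $p \leq (n+m)/(m+1)$. Equivalently, I must show that the interval $[n+1,\, n + \lfloor\tfrac13\sqrt{n}\rfloor]$ always contains a multiple of some prime from the dyadic window $(\sqrt{n}, 2\sqrt{n}]$; dually, that $\min_p\,((-n) \bmod p) < \tfrac13\sqrt{n}$ as $p$ ranges over primes in $(\sqrt{n}, 2\sqrt{n}]$.

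This is heuristically extremely safe: that set of multiples has density $\asymp 1/\log n$ near $n$, hence average gap $O(\log n)$ there, and a random model for the residues $(-n) \bmod p$ predicts $\min_p\,((-n) \bmod p) \asymp \log n$ — far below $\tfrac13\sqrt{n}$; the same heuristic also suggests the abstract's $\omega(\pp_n^+) \sim \kappa\sqrt{n}/\log n$, since a positive proportion of the $\sim \sqrt{n}/\log n$ primes in $(\sqrt{n}, 2\sqrt{n}]$ should meet the congruence condition. Working in dyadic blocks of $n$, a large-sieve estimate makes part of this rigorous: for each prime $p$ in a fixed dyadic subrange of $(\sqrt{n}, 2\sqrt{n}]$ the event ``$p$ is bad for $n$'' removes $n$ from $\gg p$ residue classes mod $p$, so the large sieve inequality bounds the number of bad $n \leq X$ by $O(\sqrt{X}\log X)$. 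Hence the conjecture holds for all $n$ outside a set of density zero.

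The main obstacle will be passing from ``almost all $n$'' to ``all $n > 192$'': this is a statement about the \emph{maximal} gap in the above set of multiples (equivalently about $\max_n\min_p((-n)\bmod p)$), for which the large sieve unavoidably leaves an exceptional set and elementary arguments do not seem to help. The natural fallback is a conditional proof — for instance under GRH or under a suitable hypothesis on prime factors of consecutive integers, which should force $\min_p\,((-n)\bmod p) = (\log n)^{O(1)}$ — followed by a direct computation over the finite range $192 < n \leq n_0$ and over the small exceptional $n \leq 192$ that pin down the threshold. Absent such input, the statement appears to sit at the same depth as known open problems on smooth numbers in short intervals and on prime factors of consecutive integers.
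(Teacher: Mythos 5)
You were asked to prove Conjecture~\ref{conj:lambda-bound}, but note that the paper itself offers no proof: this is an open conjecture, supported only by computation, and stated there to be equivalent to $\omega(\pp_n^+) > 0$ for $n > 192$ (Theorem~\ref{thm:primes-count} gives only the upper bound $\omega(\pp_n^+) < \sqrt{n}$, not positivity). So the only question is whether your argument closes the gap, and it does not. What is correct and genuinely aligned with the paper's machinery: your two-digit reformulation for $p > \sqrt{n}$, namely $p \mid \pp_n$ iff $\intpart{n/p} + (n \bmod p) \geq p$, equivalently $p \leq (n+m_p)/(m_p+1)$ with $m_p = (-n) \bmod p$, is exactly the content of \eqref{eq:sp-intpart} and Lemma~\ref{lem:delta-sp}; the $m=1$ case via $\rad^*(n+1) \mid \pp_n$ is Theorem~\ref{thm:primes-div}(c); and the reduction to ``some $m \leq \tfrac13\sqrt{n}$ such that $n+m$ has a prime factor in $(\sqrt{n}, 2\sqrt{n}]$'' is sound for $n$ in the stated range (the cofactor $(n+m)/p > \tfrac12\sqrt{n} \geq m+1$ check works), as is your remark that $\lambda_n^* < \sqrt{n}$ is marginally stronger than $\GP(\pp_n) > \sqrt{n}$.

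The gap is that everything beyond this reduction is heuristic or ``almost all.'' The large-sieve step---even granting the sketched details, which need care because the relevant window of primes $(\sqrt{n}, 2\sqrt{n}]$ moves with $n$ and the forbidden residue classes number $\asymp p$, near the degenerate regime of the sieve---at best bounds the count of exceptional $n \leq X$ by something like $O(\sqrt{X}\log X)$. That leaves open the possibility of infinitely many exceptions and says nothing about the specific threshold $n > 192$; passing from a density-zero exceptional set to \emph{every} $n > 192$ is precisely the content of the conjecture, and, as you yourself observe, it sits at the depth of open problems on largest prime factors of integers in short intervals and of consecutive integers. The GRH-conditional route is only invoked, not carried out, and no finite verification beyond the paper's own computations is supplied. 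So your proposal should be read as a correct reformulation (consistent with, and partly re-deriving, Lemmas~\ref{lem:sp-all-n}--\ref{lem:delta-sp}) plus supporting evidence for Conjectures~\ref{conj:lambda-bound} and~\ref{conj:primes-asymp}, not as a proof; the statement remains open, exactly as the paper says.
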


We further introduce the decomposition
\[
  \pp_n = \pp_n^- \cdot \pp_n^+,
\]
where
\[
  \pp_n^- := \prod_{\substack{
    p \, < \, \sqrt{n} \\
    s_p(n) \, \geq \, p}} p
  \quad \text{and} \quad
  \pp_n^+ := \prod_{\substack{
    p \, > \, \sqrt{n} \\
    s_p(n) \, \geq \, p}} p.
\]

Note that the omitted condition $p = \sqrt{n}$ has no effect on the above decomposition.
Indeed, if $p = \sqrt{n} \in \ZZ$, then $p^2 = n$ and so $s_p(n) = 1$.
We keep in mind that the prime factors of $\pp_n^+$ are implicitly bounded by
Theorem~\ref{thm:primes-bound}.
Let $\intpart{\,\cdot\,}$ denote the integer part.
Define the additive function $\omega(n)$ counting the prime divisors of $n$.

\begin{theorem} \label{thm:primes-count}
If $n \geq 2$, then
\[
  \omega( \pp_n^+ ) = \sum_{\substack{
    p \, > \, \sqrt{n}\\
    \intpart{\frac{n-1}{p-1}} \, > \, \intpart{\frac{n}{p}}}} \!\! 1.
\]
Moreover, we have the estimates
\[
  \omega( \pp_n^- ) < \frac{5}{2} \frac{\sqrt{n}}{\log n}
  \quad \text{and} \quad
  \omega( \pp_n^+ ) < \sqrt{n}.
\]
\end{theorem}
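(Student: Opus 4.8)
The plan is to handle the three assertions in turn.

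\emph{The formula for $\omega(\pp_n^+)$.} The key input is Legendre's identity $v_p(n!) = \sum_{i \geq 1}\intpart{n/p^i} = (n - s_p(n))/(p-1)$. For a prime $p > \sqrt{n}$ one has $p^2 > n$, so this sum reduces to its first term and $s_p(n) = n - (p-1)\intpart{n/p}$. Since $\pp_n^+$ is squarefree, $\omega(\pp_n^+)$ counts exactly the primes $p > \sqrt{n}$ with $s_p(n) \geq p$, i.e.\ with $n - (p-1)\intpart{n/p} \geq p$. Dividing through by $p-1 > 0$ and using $\tfrac{n-1}{p-1} = \tfrac{n-p}{p-1} + 1$, this inequality is equivalent to $\intpart{n/p} < \intpart{(n-1)/(p-1)}$; the resulting sum is finite because the inequality fails once $p > n$. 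This yields the stated identity.

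\emph{The bound $\omega(\pp_n^+) < \sqrt{n}$.} I would argue directly from the two-digit expansion rather than from prime counting. Fix a contributing prime $p > \sqrt{n}$ and put $q := \intpart{n/p}$; then $1 \leq q < \sqrt{n}$, and by the computation above the conditions ``$\intpart{n/p} = q$'' and ``$s_p(n) \geq p$'' together force $\tfrac{n}{q+1} < p \leq \tfrac{n+q}{q+1}$. This half-open interval has length $\tfrac{q}{q+1} < 1$, so it contains at most one integer, hence at most one prime. Consequently distinct contributing primes give distinct values of $q$, and as $q$ runs through the integers in $[1,\sqrt{n})$ — a set of fewer than $\sqrt{n}$ elements, whether or not $\sqrt{n}\in\ZZ$ — we obtain $\omega(\pp_n^+) < \sqrt{n}$, uniformly for $n \geq 2$.

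\emph{The bound on $\omega(\pp_n^-)$.} Here one starts from the trivial estimate $\omega(\pp_n^-) \leq \pi(\sqrt{n})$, since $\pp_n^-$ is a product of distinct primes below $\sqrt{n}$. Writing $\tfrac52\tfrac{\sqrt n}{\log n} = \tfrac54\tfrac{\sqrt n}{\log\sqrt n}$, it suffices to show $\pi(x) < \tfrac54\,x/\log x$ for $x = \sqrt{n}$. This is where the real work lies: the inequality is marginally stronger than the Rosser--Schoenfeld bound $\pi(x) < 1.25506\,x/\log x$ and actually \emph{fails} on a short interval around $x = 113$, at which $\pi(x)\log x / x$ is largest. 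The plan is to combine (i) a Dusart-type explicit upper bound for $\pi(x)$ valid for $x \geq x_0$ with a moderate threshold $x_0$, which settles all $n \geq x_0^2$; and (ii) for the remaining finitely many $n$, a direct verification, aided by the observation that in the few near-critical windows — notably near $\sqrt{n} = 113$ — some large prime $p < \sqrt{n}$ violates $s_p(n) \geq p$, because there such an $n$ lies close to $p^2$ and this keeps the base-$p$ digit sum of $n$ small, so that the crude bound $\pi(\sqrt{n})$ may be lowered by the one unit that is needed. Organizing these explicit estimates so that the slim margin between the constant $5/2$ and the true asymptotic constant $2$ is never exhausted is the delicate point of this part.
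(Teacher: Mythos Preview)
Your arguments for the identity and for the bound $\omega(\pp_n^+)<\sqrt{n}$ are correct. For the identity you do essentially what the paper does in its Lemma on $\Delta_n(p)$: write $s_p(n)=n-(p-1)\intpart{n/p}$ for $p>\sqrt{n}$ and rearrange. For the bound you take a route that is close in spirit to, but cleaner than, the paper's. The paper introduces the step functions $\phi_n(x)=\intpart{(n-1)/(x-1)}$ and $\psi_n(x)=\intpart{n/x}$, shows $\Delta_n=\phi_n-\psi_n\in\{0,1\}$ on $(\sqrt{n},\infty)$, groups the integers $k>\sqrt{n}$ by the level sets of $\phi_n$, proves that each level contributes at most once, excludes the level $h=1$, and finally bounds the number $\ell-1$ of remaining levels by $\sqrt{n}$. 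Your decomposition by $q=\psi_n(p)=\intpart{n/p}$ achieves the same ``at most one contribution per level'' conclusion in one stroke via the interval $\bigl(\tfrac{n}{q+1},\tfrac{n+q}{q+1}\bigr]$ of length $<1$, and the level count $q\in[1,\sqrt{n})\cap\ZZ$ is immediate. The paper's version has the minor advantage of bounding $\sum_{k>\sqrt{n}}\Delta_n(k)$ over \emph{all} integers, not just primes; yours is shorter.

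Where your proposal falls short is the bound on $\omega(\pp_n^-)$. You correctly reduce to $\pi(x)<\tfrac{5}{4}\,x/\log x$ at $x=\sqrt{n}$ and correctly flag the failure near $x=113$, but what you describe is a plan, not a proof: invoking an unspecified Dusart bound above some $x_0$ and then a case-by-case check below, hoping the margin suffices. The paper closes this with a single citation: Rosser--Schoenfeld prove that $\pi(x)<\tfrac{5}{4}\,x/\log x$ holds for \emph{all} $x>1$ with the sole exception $\intpart{x}=113$. This leaves only the range $113^2\le n<114^2$ to check, where $\pi(\sqrt{n})=30$ while $\tfrac{5}{2}\sqrt{n}/\log n$ hovers just below or above $30$; there one verifies directly that $\omega(\pp_n^-)$ is well under the threshold (the paper records $\omega(\pp_{113^2}^-)=19$). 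So the ``delicate point'' you anticipate dissolves once the right explicit bound is quoted.
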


The estimate of $\omega( \pp_n^+ )$ is apparently better than counting primes
in the interval $\bigl[ \sqrt{n}, \frac{n+1}{\lambda_n} \bigr]$
by the  prime-counting function $\pi(x) \sim \frac{x}{\log x}$,
while the obvious estimate $\omega( \pp_n^- ) \leq \pi(\sqrt{n})$ is sharp,
see Table~\ref{tbl:primes-values2}.

Conjecture~\ref{conj:lambda-bound} is equivalent to $\omega( \pp_n^+ ) > 0$
for $n > 192$.
On the basis of advanced computations, we raise the following conjecture,
which gives even more evidence to hold with a better approximation.

\begin{conjecture} \label{conj:primes-asymp}
There exists a constant $\kappa > 1$ such that
\[
  \omega( \pp_n^+ ) \, \sim \, \kappa \, \frac{\sqrt{n}}{\log n}
  \quad \text{as $n \to \infty$.}
\]
\end{conjecture}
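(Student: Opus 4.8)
As the statement is a conjecture, I describe a heuristic that predicts it --- with an explicit value of $\kappa$ --- and then point to the obstruction that seems to keep a rigorous proof out of reach. The starting point is the exact formula of Theorem~\ref{thm:primes-count}. For a prime $p > \sqrt n$ the base-$p$ expansion of $n$ has at most two digits, say $n = mp + r$ with $m = \intpart{n/p} \ge 1$ and $0 \le r < p$, so that $s_p(n) = m + r$; writing $n - 1 = m(p-1) + (m+r-1)$ one sees that $\intpart{\frac{n-1}{p-1}}$ equals $m$ when $m + r < p$ and $m+1$ when $m + r \ge p$. Hence the condition under the sum in Theorem~\ref{thm:primes-count} is equivalent to $s_p(n) \ge p$, i.e.\ to $p\,(m+1) \le n + m$; together with $\intpart{n/p} = m$ this forces $p$ to be the (at most one) integer in the interval $\bigl(\tfrac{n}{m+1},\,\tfrac{n+m}{m+1}\bigr]$, whose length $\tfrac{m}{m+1}$ is $< 1$. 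Such an integer exists exactly when $(m+1) \nmid n$, and it then equals $q_{m+1}$, where $q_d := \lceil \tfrac{n+1}{d} \rceil$ (the least multiple of $d$ exceeding $n$, divided by $d$). Putting $d = m+1$ and checking that the constraints $p > \sqrt n$ and $\intpart{n/p} = d-1$ both reduce to $d \le \sqrt n$ up to $O(1)$ boundary terms, one gets
\[
  \omega(\pp_n^+) \;=\; \#\bigl\{\, d : 2 \le d \le \sqrt n,\ d \nmid n,\ q_d \in \PP \,\bigr\} \;+\; O(1).
\]

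So everything comes down to counting primes in the thin sequence $(q_d)_{2 \le d \le \sqrt n}$. The heuristic is the usual one: treating $q_d$ as a random integer of size $\asymp n/d$ with the expected density of prime factors, it lies in $\PP$ with ``probability'' $\sim 1/\log(n/d)$, and hence
\[
  \omega(\pp_n^+) \;\approx\; \sum_{d=2}^{\sqrt n} \frac{1}{\log(n/d)} \;=\; \sum_{d=2}^{\sqrt n} \frac{1}{\tfrac12\log n + \log(\sqrt n / d)}.
\]
The substitution $d = \sqrt n\, e^{-t}$ turns this into $\approx \int_0^{\frac12\log n} \frac{\sqrt n\, e^{-t}}{\tfrac12\log n + t}\,dt$, and since only $t = O(1)$ contributes it is $\sim \tfrac{2\sqrt n}{\log n}$ as $n \to \infty$. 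Thus the heuristic predicts the value $\kappa = 2$, in particular $\kappa > 1$. Being a first-moment estimate, this prediction is insensitive to correlations among the events $q_d \in \PP$; a second-moment computation should then give concentration around the mean.

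The hard part is that this is, at bottom, a question about the occurrence of primes in a sparse and arithmetically unstructured sequence, which lies beyond present methods: already the far weaker Conjecture~\ref{conj:lambda-bound}, equivalent to $\omega(\pp_n^+) > 0$ for every $n > 192$, is open. Near the lower end $d \approx \sqrt n$ the values $q_d$ decrease in steps of size $O(1)$ and cover a fixed positive fraction --- roughly one half --- of the integers in $[\sqrt n, 2\sqrt n]$, so results on primes in short intervals (Baker--Harman--Pintz and the like) guarantee primes in each such window but cannot preclude the subsequence $(q_d)$ from systematically missing all of them; ruling out such a ``conspiracy'', even just to get positivity, is exactly the difficulty. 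A conditional proof of the full asymptotic ought to follow from a sufficiently uniform Hardy--Littlewood / prime $k$-tuples input applied to the forms $q_d = \tfrac1d\bigl(n + ((-n) \bmod d)\bigr)$, but an unconditional argument --- even for the bare order of magnitude $\omega(\pp_n^+) \gg \sqrt n/\log n$ --- would seem to require a genuinely new idea.
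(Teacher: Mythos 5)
The statement you were asked about is one of the paper's two open conjectures: the paper offers no proof of it, only computations up to $n = 10^7$ (suggesting $\kappa \approx 1.8$ in that range together with an apparent $O(\log n)$ error), and it states explicitly that both conjectures remain open. You correctly treat it as unproved, so there is no proof here to certify, but your heuristic reduction is sound and lines up well with the paper's machinery. Your two-digit computation showing that, for $p > \sqrt{n}$ with $n = mp + r$, the condition $\intpart{\frac{n-1}{p-1}} > \intpart{\frac{n}{p}}$ is equivalent to $s_p(n) \ge p$ is exactly the content of Lemma~\ref{lem:delta-sp}; and your reparametrization by $d = m+1 \le \sqrt{n}$, with at most one admissible candidate $q_d = \lceil (n+1)/d \rceil$ per $d$, is a clean equivalent of the step-counting proof of Lemma~\ref{lem:delta-sum} (your $d$ is the paper's step height $h$, and ``at most one integer in an interval of length $\frac{m}{m+1}$'' is the paper's ``at most one event per step''), so in particular it re-derives $\omega(\pp_n^+) < \sqrt{n}$. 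The first-moment Cram\'er-type computation then predicts $\kappa = 2$, which sharpens the conjecture's bare $\kappa > 1$; you should, however, reconcile this with the empirical $\kappa \approx 1.8$ reported in the paper for $n \le 10^7$ (plausibly a secondary-term effect of relative size $O(1/\log n)$, but it deserves an explicit remark rather than silence).

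The genuine gap is the one you name yourself: nothing in this is a proof. The independence assumption behind assigning ``probability'' $1/\log(n/d)$ to the sparse, $n$-dependent sequence $(q_d)_{d \le \sqrt{n}}$ is precisely what cannot currently be justified; even the far weaker Conjecture~\ref{conj:lambda-bound}, equivalent to $\omega(\pp_n^+) > 0$ for $n > 192$, is open, and as you note, short-interval prime theorems cannot exclude the sequence $(q_d)$ conspiring to avoid primes. So your submission is an acceptable heuristic discussion -- in the same spirit as, and somewhat more explicit than, the paper's numerical evidence -- but it does not and cannot stand as a proof of Conjecture~\ref{conj:primes-asymp}, which is consistent with the paper, since the paper gives none.
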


Computations up to $n = 10^7$ suggest the value $\kappa = 1.8$ in that range
and an error term $O(\log n)$, see Figure~\ref{fig:graph1}.
Conjecture~\ref{conj:primes-asymp} implies at once the much weaker
Conjecture~\ref{conj:lambda-bound} for sufficiently large values.
However, both conjectures remain open.

Note that Theorem~\ref{thm:primes-bound}, as well as Theorem~\ref{thm:primes-denom}
in the next section, were recently given by Sondow and the author in
\cite[Thm.~1,~2,~4]{Kellner&Sondow:2017} using other notations.
We will choose here a quite different approach, starting from the more general
product~\eqref{eq:primes-prod}, to attain to Theorems~\ref{thm:primes-bound}
and~\ref{thm:primes-denom} by means of $p$-adic methods,
which result in short and essentially different proofs.

Outline for the rest of the paper: The next section shows the relations between
$\pp_n$ and the denominators of the Bernoulli polynomials in
Theorem~\ref{thm:primes-denom}.
Section~\ref{sec:div-prop} demonstrates the divisibility properties of $\pp_n$
and contains the proofs of Theorems~\ref{thm:primes-div} -- \ref{thm:primes-bound2}.
In Section~\ref{sec:step-func} we use step functions on a hyperbola to give a
proof of Theorem~\ref{thm:primes-count}.
Section~\ref{sec:denom-poly} discusses the $p$-adic valuation of polynomials and
includes the proof of Theorem~\ref{thm:primes-denom}.


\section{Bernoulli polynomials} \label{sec:bern-poly}

The Bernoulli polynomials are defined by the generating function
\[
  \frac{t e^{xt}}{e^t - 1} = \sum_{n \geq 0} B_n(x) \frac{t^n}{n!} \quad (|t| < 2\pi)
\]
and are explicitly given by
\begin{equation} \label{eq:bern-poly}
  B_n(x) = \sum_{k=0}^{n} \binom{n}{k} B_k \, x^{n-k} \quad (n \geq 0),
\end{equation}
where $B_k = B_k(0)$ is the $k$th Bernoulli number.
First values are
\begin{equation} \label{eq:bern-value}
  B_0 = 1, \quad B_1 = -\frac{1}{2}, \quad B_2 = \frac{1}{6},
  \quad B_4 = -\frac{1}{30}, \quad B_6 = \frac{1}{42},
\end{equation}
while $B_k = 0$ for odd $k > 1$, see~\cite[Chap.~3.5, pp.~112--125]{Prasolov:2010}.
The von~Staudt--Clausen theorem describes the denominator of the
Bernoulli number with even index.
Together with \eqref{eq:bern-value}, we have the squarefree denominators
\begin{equation} \label{eq:bern-denom}
  \dd_n := \denom( B_n ) =
    \left\{
      \begin{array}{ll}
        2, & \text{if $n = 1$}, \\
        1, & \text{if $n \geq 3$ is odd}, \\
        \prod\limits_{p-1 \, \mid \, n} p, & \text{if $n \geq 2$ is even}
      \end{array}
    \right.
  \quad (n \geq 1).
\end{equation}
In addition, we define the related polynomials
\begin{equation} \label{eq:def-bt}
  \BT_n(x) := B_n(x) - B_n,
\end{equation}
which have no constant term. Considering the power-sum function
\[
  S_n(x) := 1^n + 2^n + \dots + (x-1)^n \quad (x \in \NN),
\]
it is well known that
\begin{equation} \label{eq:power-sum}
  S_n(x) = \frac{\BT_{n+1}(x)}{n+1},
\end{equation}
implying that $\BT_n(x)$ is an integer-valued function.
The denominators of the polynomials $\BT_n(x)$, $B_n(x)$, and $S_n(x)$ are
surprisingly connected with the product~\eqref{eq:primes-prod} as follows.

\begin{theorem}[Kellner and Sondow \cite{Kellner&Sondow:2017}] \label{thm:primes-denom}
If $n \geq 1$, then we have the relations
\begin{alignat*}{3}
  &\denom( \BT_n(x) ) &&= \pp_n, \\
  &\denom( B_n(x) )   &&= \lcm( \pp_n, \dd_n ), \\
  &\denom( S_n(x) )   &&= (n+1) \, \pp_{n+1}.
\end{alignat*}
\end{theorem}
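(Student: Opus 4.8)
The plan is to pass to $p$-adic valuations. For $f \in \QQ[x]$ one has $v_p(\denom(f)) = \max\bigl(0, -\min_j v_p(c_j)\bigr)$, the minimum running over the coefficients $c_j$ of $f$; so for each of the three polynomials it suffices to find, prime by prime, the least valuation occurring among its coefficients. The engine for all three identities is one lemma: \emph{for $n \ge 1$ and any prime $p$,}
\[
  \min_{0 \le k \le n-1} v_p\!\left( \binom{n}{k} B_k \right) = -\, v_p(\pp_n),
\]
\emph{which is $-1$ if $s_p(n) \ge p$ and $0$ otherwise.} Since the coefficient of $x^n$ in $\BT_n(x)$ is $\binom{n}{0}B_0 = 1$, this is equivalent to $\denom(\BT_n(x)) = \pp_n$. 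To prove the lemma I would combine two classical inputs: Kummer's theorem, $v_p\binom{n}{k} = \bigl(s_p(k) + s_p(n-k) - s_p(n)\bigr)/(p-1)$, which counts the carries in the base-$p$ addition $k + (n-k)$ and is hence $\ge 0$; and the von~Staudt--Clausen theorem together with \eqref{eq:bern-value} and \eqref{eq:bern-denom}, which give $v_p(B_k) = -1$ exactly when $k \ge 1$ and $(p-1) \mid k$ --- for odd $p$ this forces $k$ even, so $B_k \ne 0$, and for $p = 2$ it also covers $k = 1$ with $B_1 = -\tfrac{1}{2}$ --- while $v_p(B_k) \ge 0$ in all other cases.

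For the bound $\min \ge -1$: if $v_p(B_k) \ge 0$ the coefficient $\binom{n}{k}B_k$ is $p$-integral, and if $v_p(B_k) = -1$ then the nonnegative carry count gives $v_p(\binom{n}{k}B_k) \ge -1$; together with the coefficient $1$ of $x^n$ this already shows $v_p(\denom(\BT_n(x))) \le 1$ for every $p$. Suppose $s_p(n) \ge p$ and write $n = \sum_i a_i p^i$ in base $p$, so $\sum_i a_i = s_p(n) \ge p$. Choose digits $0 \le b_i \le a_i$ with $\sum_i b_i = p-1$ (possible because $\sum_i a_i \ge p-1$) and put $k = \sum_i b_i p^i$. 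Then $s_p(k) = p-1$, so $(p-1) \mid k$ and $k \ge 1$; for odd $p$ the congruence $p^i \equiv 1 \pmod 2$ makes $k$ even, whence $B_k \ne 0$ and $v_p(B_k) = -1$, and for $p = 2$ the number $k$ is a power of $2$ with $v_2(B_k) = -1$ (including the case $k = 1$); the digitwise domination $b_i \le a_i$ makes the addition $k + (n-k)$ carry-free, so $v_p\binom{n}{k} = 0$; and $k = n$ is impossible, as it would force $p-1 = \sum_i b_i = \sum_i a_i = s_p(n) \ge p$, hence $k \le n-1$. Therefore $v_p(\binom{n}{k}B_k) = -1$, and the minimum equals $-1$. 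Conversely, if $s_p(n) < p$ and some $k$ with $0 \le k \le n-1$ had $v_p(\binom{n}{k}B_k) < 0$, then $v_p(B_k) = -1$, so $(p-1) \mid k$ and $k \ge 1$, whence $s_p(k) \ge p-1$; a negative valuation forces a carry-free addition, i.e. $s_p(n) = s_p(k) + s_p(n-k) \ge s_p(k) \ge p-1$, and since $s_p(n) < p$ this gives $s_p(n) = p-1$, $s_p(n-k) = 0$, so $n = k$, contradicting $k \le n-1$. This proves the lemma, hence $\denom(\BT_n(x)) = \pp_n$.

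The remaining two identities follow at once. Since $B_n(x) = \BT_n(x) + B_n$ and $\BT_n(x)$ has zero constant term, the coefficients of $B_n(x)$ are those of $\BT_n(x)$ together with the extra value $B_n$, so $\denom(B_n(x)) = \lcm\bigl(\denom(\BT_n(x)), \denom(B_n)\bigr) = \lcm(\pp_n, \dd_n)$. For the power sums, \eqref{eq:power-sum} gives $S_n(x) = \BT_{n+1}(x)/(n+1)$, and coefficientwise
\[
  v_p\bigl( \denom(S_n(x)) \bigr)
    = \max_{0 \le k \le n} \max\Bigl( 0,\; v_p(n+1) - v_p\bigl( \binom{n+1}{k} B_k \bigr) \Bigr).
\]
By the lemma the inner valuations are $\ge -v_p(\pp_{n+1})$, with $-v_p(\pp_{n+1})$ attained for some $k \le n$, while the coefficient of $x^{n+1}$ equals $1$; hence the maximum is $v_p(n+1) + v_p(\pp_{n+1}) = v_p\bigl((n+1)\,\pp_{n+1}\bigr)$, and taking the product over all $p$ gives $\denom(S_n(x)) = (n+1)\,\pp_{n+1}$.

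The main obstacle is the lemma, and within it the construction of the extremal index $k$ when $s_p(n) \ge p$: one must simultaneously arrange that $(p-1) \mid k$, that $B_k \ne 0$ with $v_p(B_k) = -1$, that the addition $k + (n-k)$ is carry-free, and that $1 \le k \le n-1$. The digitwise choice handles all of these uniformly, but the prime $p = 2$ (where $p-1 = 1$ and $B_1 = -\tfrac{1}{2}$ enter) and the threshold between $s_p(n) = p-1$ and $s_p(n) \ge p$ warrant separate checking.
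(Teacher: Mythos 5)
Your argument is correct, and its overall architecture coincides with the paper's: everything is reduced to the statement that the minimal $p$-adic valuation of the coefficients of $\BT_n(x)$ is $-1$ when $s_p(n)\geq p$ and $0$ otherwise (the paper's Lemma~\ref{lem:ordp-bt}), after which the identities for $\denom(B_n(x))$ and $\denom(S_n(x))$ follow exactly as you say. Where you genuinely diverge is in the proof of that key lemma. The paper deduces it from Carlitz's divisibility criterion (Lemma~\ref{lem:Carlitz}), which characterizes when $p$ divides \emph{all} binomial coefficients $\binom{n}{\ell d}$ with $d=p-1$, and it needs a separate parity trick for $p=2$ (relating $\binom{n}{2\ell+1}$ to $\binom{n}{2\ell}$ modulo $2$) because $d=1$ falls outside the hypotheses. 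You instead use Kummer's carry-counting formula together with an explicit digitwise construction of a single extremal index $k$ with $s_p(k)=p-1$, $b_i\leq a_i$, so that $v_p\binom{n}{k}=0$ and $v_p(B_k)=-1$ by von~Staudt--Clausen, plus a clean converse showing that $s_p(n)<p$ forbids any coefficient of negative valuation; this treats $p=2$ almost uniformly (only the observation that $k$ is then a power of $2$, covering $k=1$ via $B_1=-\tfrac12$, is special). The trade-off: the paper's route is shorter given the Carlitz citation, while yours is self-contained, exhibits a concrete coefficient realizing the extremal valuation rather than arguing through a blanket divisibility statement, and avoids the asymmetric $p=2$ case analysis. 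Your handling of the delicate points --- $k\neq n$, the threshold $s_p(n)=p-1$ versus $s_p(n)\geq p$, and the $\max(0,\cdot)$ bookkeeping in the $S_n$ identity, where $v_p(n+1)$ may interact with the pole of the coefficients --- is sound.
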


For an explicit product formula of $\denom( B_n(x) )$,
we refer to \cite[Thm.~4]{Kellner&Sondow:2017}.


\section{Divisibility properties} \label{sec:div-prop}

An integer $n \geq 0$ has a unique finite $p$-adic expansion
$n = \sum\limits_{k=0}^{r} a_k \, p^k$
with a definite $r \geq 0$ and base-$p$ digits $a_k$ satisfying
$0 \leq a_k \leq p-1$.
The sum of these digits defines the function
$s_p( n ) := \sum\limits_{k=0}^{r} a_k$.
Note that
\begin{equation} \label{eq:sp-less-p}
  s_p(n) < p  \quad (p \geq n),
\end{equation}
since $s_p(n) = 1$ if $p = n$, and $s_p(n) = n$ if $p > n$.
Moreover, we have some complementary results as follows.

\begin{lemma} \label{lem:sp-all-n}
If $n \geq 1$ and $p$ is a prime, then
\[
  \frac{n+1}{2} < p < n \quad \implies \quad s_p(n) < p.
\]
\end{lemma}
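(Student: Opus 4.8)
The plan is to examine the base-$p$ expansion of $n$ directly under the hypothesis $\frac{n+1}{2} < p < n$. Since $p < n < 2p$, the number $n$ has exactly two base-$p$ digits: we can write $n = p + b$ with $0 \le b \le p-1$, so that $s_p(n) = 1 + b$. The inequality $s_p(n) < p$ is therefore equivalent to $1 + b < p$, i.e. to $b \le p-2$, i.e. to $b \ne p-1$. So the only way the conclusion can fail is if $b = p-1$, which means $n = p + (p-1) = 2p-1$, i.e. $p = \frac{n+1}{2}$.

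The key step is then to rule out $n = 2p-1$ using the strict inequality $\frac{n+1}{2} < p$ in the hypothesis. If $n = 2p-1$, then $\frac{n+1}{2} = p$, contradicting $p > \frac{n+1}{2}$. Hence $b \le p-2$ and $s_p(n) = 1+b \le p-1 < p$, as claimed. One should also note in passing that the hypothesis $p < n$ is what guarantees $n$ has more than one digit (so that the leading digit contributes the $1$), and $p > \frac{n+1}{2} \ge 1$ makes $p$ genuinely a prime in the stated range; these are bookkeeping remarks rather than real content.

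There is essentially no obstacle here: the whole lemma is the observation that a prime $p$ in the half-open-looking window $\left(\frac{n+1}{2}, n\right)$ forces $n$ to have base-$p$ digits $(1, b)$ with $b < p-1$, the borderline case $b = p-1$ being exactly excluded by the strictness at the left endpoint. The only thing to be careful about is the edge behavior — confirming that $n \ge 1$ together with $p < n$ indeed yields $n \ge 3$ and $p \ge 2$, so the interval is nonempty only for such $n$, and that the two-digit representation $n = p + b$ is the correct and unique one when $p < n < 2p$. I would present the argument in three lines: write $n = p+b$, note $s_p(n) = 1+b$, and observe $b = p-1 \iff n = 2p-1 \iff p = \frac{n+1}{2}$, which is forbidden.
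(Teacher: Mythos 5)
Your proof is correct and takes essentially the same route as the paper: both write $n = p + b$ as a two-digit base-$p$ number and conclude $s_p(n) = 1 + b < p$, the paper deducing $b < p-1$ directly from $n < 2p-1$, you excluding the borderline $b = p-1$ (i.e.\ $n = 2p-1$) via the strictness of $\frac{n+1}{2} < p$ — the same observation in slightly different packaging.
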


\begin{proof}
By assumption we have $p < n < 2p-1$. Thus, we can write $n = a_0 + p$
with $0 < a_0 < p-1$. This implies $s_p(n) = a_0 + 1 < p$.
\end{proof}

\begin{lemma} \label{lem:sp-even-n}
If $n \geq 2$ is even and $p$ is a prime, then
\[
  \frac{n+1}{3} < p < \frac{n+1}{2} \quad \implies \quad s_p(n) < p.
\]
\end{lemma}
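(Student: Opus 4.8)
The plan is to mimic the proof of Lemma~\ref{lem:sp-all-n}: turn the bound on $p$ into a two-sided bound on $n$, read off the base-$p$ expansion of $n$ directly, and use the parity of $n$ to kill the one borderline case. Concretely, I would first rewrite $\frac{n+1}{3} < p < \frac{n+1}{2}$ as $2p < n+1 < 3p$; since $n \in \ZZ$ this gives $2p \le n \le 3p-2$, hence $n = 2p + a_0$ with $0 \le a_0 \le p-2$.

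Next, assume $p \ge 3$. Then $2 \le p-1$, so $(2, a_0)$ is a legitimate base-$p$ digit string for $n$, and therefore $s_p(n) = a_0 + 2$. This is $< p$ except when $a_0 = p-2$, i.e.\ when $n = 3p-2$. But $p$ is then an odd prime, so $3p-2$ is odd, contradicting the hypothesis that $n$ is even. Hence $a_0 \le p-3$ and $s_p(n) \le p-1 < p$, as claimed. Finally the prime $p = 2$ must be handled separately, since the digit-reading step above requires $p \ge 3$; here the hypothesis forces $4 < n+1 < 6$, so $n = 4$, and $s_2(4) = 1 < 2$.

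There is no real obstacle: the argument reduces to a one-line digit computation. The only subtlety is the boundary value $n = 3p-2$, and the evenness hypothesis is present precisely to exclude it (for odd $p$); one just has to remember to dispose of the excluded prime $p = 2$ by direct inspection.
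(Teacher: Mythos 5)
Your proof is correct and follows essentially the same route as the paper's: write $n = a_0 + 2p$ with $a_0 \le p-2$, use the parity of $n$ (with $p$ odd) to exclude the borderline case $a_0 = p-2$, and check $p=2$ (forcing $n=4$) separately. The only difference is cosmetic: you make explicit the contradiction $n = 3p-2$ odd, which the paper's proof leaves implicit in asserting $a_0 \le p-3$.
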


\begin{proof}
Since $n$ is even, we infer that $2p \leq n < 3p-1$.
If $p=2$, then we only have the case $n=4$, so $s_p(n) = 1 < p$.
For odd $p \geq 3$ we obtain
$n = a_0 + 2p$ with $a_0 \leq p-3$.
Consequently, $s_p(n) = a_0 + 2 < p$.
\end{proof}

\begin{lemma} \label{lem:sp-lambda}
Let $n \geq 1$, $p$ be an odd prime, and $1 \leq \lambda < p$.
Write $n = a_0 + a_1 \, p + \dotsm$.
Then
\[
  \frac{n+1}{\lambda+1} < p < \frac{n+1}{\lambda}
  \quad \text{and} \quad a_0 < p - \lambda
  \quad \implies \quad s_p(n) < p.
\]
\end{lemma}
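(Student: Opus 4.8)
The plan is to use the two hypotheses to pin down the base-$p$ expansion of $n$ almost completely. First I would clear denominators in $\frac{n+1}{\lambda+1} < p < \frac{n+1}{\lambda}$, which (as $\lambda \geq 1$) gives $\lambda p < n+1 < (\lambda+1)p$, and then use integrality of all quantities to rewrite this as
\[
  \lambda p \leq n \leq (\lambda+1)p - 2 .
\]
Since $\lambda \leq p-1$ by assumption, the right-hand bound yields $n \leq p^2 - 2 < p^2$, so the base-$p$ expansion of $n$ has at most two digits; that is, $n = a_0 + a_1 p$ with $0 \leq a_0, a_1 \leq p-1$ and all further digits zero.

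Next I would read off the digits. From $\lambda p \leq n < (\lambda+1)p$ we get $\intpart{n/p} = \lambda$, while from $n = a_0 + a_1 p$ with $0 \leq a_0 < p$ we also have $\intpart{n/p} = a_1$; hence $a_1 = \lambda$ (this is exactly where $\lambda \leq p-1$ is needed, so that $\lambda$ is a legitimate base-$p$ digit) and $a_0 = n - \lambda p$. Therefore
\[
  s_p(n) = a_0 + a_1 = a_0 + \lambda ,
\]
and now the second hypothesis $a_0 < p - \lambda$ gives immediately $s_p(n) < (p-\lambda) + \lambda = p$, which is the claim.

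I do not expect a genuine obstacle here: the whole argument is a few lines of bookkeeping with the inequalities. The one point deserving a moment's attention is the bound $n < p^2$, since it is precisely what forces the higher base-$p$ digits to vanish and makes $s_p(n)$ equal to $a_0+\lambda$ rather than something larger; without it the conclusion could fail. This lemma generalizes Lemma~\ref{lem:sp-all-n} (the case $\lambda = 1$) and Lemma~\ref{lem:sp-even-n} (the case $\lambda = 2$ for even $n$), in whose ranges the extra hypothesis $a_0 < p - \lambda$ happens to hold automatically.
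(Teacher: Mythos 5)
Your proof is correct and follows essentially the same route as the paper: both derive $\lambda p \leq n \leq (\lambda+1)p-2$ from the two-sided inequality and conclude $s_p(n) = a_0 + \lambda < p$ from $a_0 < p-\lambda$. You merely make explicit the step $n < p^2$ (so that $a_1 = \lambda$ and no higher digits occur), which the paper leaves implicit.
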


\begin{proof}
The left inequation above yields
\[
  \lambda \, p \leq n \leq p-2 + \lambda \, p.
\]
Since $\lambda < p$ and $a_0 < p - \lambda$,
we have $0 \leq a_0 < p - \lambda$,
implying that $s_p(n) = a_0 + \lambda < p$.
\end{proof}

\begin{proof}[Proof of Theorem~\ref{thm:primes-div}]
Let $n \geq 1$ and $p$ be a prime.
Recall the product of $\pp_n$ in \eqref{eq:primes-prod}.

(a) If $n > p$ and $n \equiv -1 \pmod{p}$,
then we can write
\[
  n = p-1 + m \, p
\]
with some $m \geq 1$. This implies $s_p(n) \geq p$ and so $p \mid \pp_n$.
Since this holds for all $m \geq 1$, the prime $p$
occurs infinitely often as a divisor in the sequence $(\pp_n)_{n \geq 1}$.

(b) If $p \mid \pp_n$, then $s_p(n) \geq p$. Let $r \geq 1$. For all $b$ with
$0 \leq b < p^r$, one observes that
\[
  m := b + n \, p^r
\]
yields no carries in its $p$-adic expansion.
Thus, we have $s_p(m) = s_p(b) + s_p(n) \geq p$. Since $r \geq 1$,
the interval $[0,p^r-1]$ can be arbitrarily large.

(c) Neglecting the trivial case, we assume that $n+1$ is composite and so
$\rad^*(n+1) > 1$.
For all prime divisors $p$ of $n+1$ we then infer by part (a) that $s_p(n) \geq p$.
This shows that
\[
  \rad^*(n+1) \mid \pp_n.
\]
Now we construct for $r \geq 2$ different primes $p_k$ an index $n$ such that
\[
  n = -1 + \prod_{k = 1}^{r} p_k
  \quad \implies \quad
  \prod_{k = 1}^{r} p_k \mid \pp_n,
\]
implying that arbitrarily many prime factors can occur.
\end{proof}

\begin{proof}[Proof of Theorem~\ref{thm:primes-bound}]
By Lemma~\ref{lem:sp-all-n} and \eqref{eq:sp-less-p} we deduce for $n \geq 1$ that
\begin{equation} \label{eq:primes-lambda}
  \pp_n = \prod_{ \substack{
    p \, \leq \, \frac{n+1}{\lambda'} \\
    s_p(n) \, \geq \, p}} p
\end{equation}
holds with $\lambda' = 2$. If $p \geq 3$, then $n=2p-1$ is odd and $s_p(n) = p$,
showing that the bound is sharp in this case. Now, let $n \geq 2$ be even.
Since $\frac{n+1}{2} \notin \ZZ$, we infer by using Lemma~\ref{lem:sp-even-n}
as a complement that \eqref{eq:primes-lambda} also holds with $\lambda' = 3$.
This defines $\lambda_n = 3$ for even $n$, while $\lambda_n = 2$ for odd $n$.
If $p \geq 3$, then $n=3p-1$ is even and $s_p(n) = p+1$, giving a sharp bound
for that case.
\end{proof}

\begin{proof}[Proof of Theorem~\ref{thm:primes-bound2}]
We have to determine the cases of $n$, where $\lambda_n = 4$ holds in
Theorem~\ref{thm:primes-bound},
or rather $\lambda' = 4$ holds in \eqref{eq:primes-lambda}.
The exceptional cases $n = 2p-1$ and $n = 3p-1$ for odd $p$ are already handled
by Theorem~\ref{thm:primes-bound}
providing the optimal values $\lambda_n = 2$ and $\lambda_n = 3$, respectively.

Regarding entries of $\lambda_n^*$ ($n \leq 9$) in Table~\ref{tbl:primes-values},
one observes that $\lambda_n = 4$ only holds for $n=4$ in this range.
This proves part (a).

Let $n \geq 10$.
Since $\lambda_n^* = 5.5$ for $n=10$ (see Table~\ref{tbl:primes-values}),
and $(n+1)/4 \geq 3$ for $n \geq 11$, the primes $p=2$ and $p=3$ are always
considered in \eqref{eq:primes-lambda}, when possibly taking $\lambda' = 4$.
From now let $p \geq 5$ be fixed. Write $n = a_0 + a_1 \, p + \dotsm$.
We distinguish between the following two cases.

Case $n \geq 10$ even:
We have $\lambda_n = 3$ by Theorem~\ref{thm:primes-bound}.
If $n \neq 3p-1$, we infer by Lemma~\ref{lem:sp-lambda} with $\lambda = 3$,
that $a_0 \leq p-4$ must hold. Since $n$ is even, the only exception can appear
by parity if $a_0 = p-2$, so $n = 4p-2$ and $s_p(n) = p+1$.
This defines the set of exceptions $\EE_p := \set{3p-1, 4p-2}$ in this case.

Case $n \geq 11$ odd:
We have $\lambda_n = 2$ by Theorem~\ref{thm:primes-bound}.
If $n \neq 2p-1$, then we deduce
from Lemma~\ref{lem:sp-lambda} with $\lambda = 2$, that $a_0 \leq p-3$ must hold.
Since $n$ is odd and due to parity, the only exception can happen,
when $a_0 = p-2$, so $n = 3p-2$ and $s_p(n) = p$.
If also $n \neq 3p-2$, then we derive from Lemma~\ref{lem:sp-lambda} with $\lambda = 3$,
that $a_0 \leq p-4$ must hold. Again, the only exception can occur
with $a_0 = p-3$, so $n = 4p-3$ and $s_p(n) = p$. This defines the set of exceptions
$\EO_p := \set{2p-1, 3p-2, 4p-3}$ in that case.

Consequently, if $n$ is even and $n \notin \EE_p$, respectively $n$ is odd
and $n \notin \EO_p$, for all $p \geq 5$,
then $\lambda_n = 4$. This proves parts (b) and (c), completing the proof.
\end{proof}


\section{Step functions} \label{sec:step-func}

As usual, we write $x = \intpart{x} + \fracpart{x}$, where $0 \leq \fracpart{x} < 1$
denotes the fractional part. We define for $n \geq 2$ the step functions,
giving the integer part of a hyperbola, by
\[
  \phi_n(x) := \intpart{\frac{n-1}{x-1}}
  \quad \text{and} \quad
  \psi_n(x) := \intpart{\frac{n}{x}},
\]
and their difference
\[
  \Delta_n(x) := \phi_n(x) - \psi_n(x)
\]
on the intervals $(\sqrt{n},\infty)$. Note that
\begin{align}
  \phi_n(x) = \psi_n(x) &=
    \left\{
      \begin{array}{ll}
        1, & \text{if $x = n$,} \\
        0, & \text{if $x > n$,}
      \end{array}
    \right. \label{eq:phi-psi} \\
\shortintertext{and}
  \Delta_n(x) &= 0 \quad (x \geq n). \label{eq:delta-0}
\end{align}

Since we are here interested in summing the function $\Delta_n(x)$,
it should be noted that there is a connection with Dirichlet's divisor problem.
This can be stated with Voronoi's error term as
\[
  \sum_{k=1}^{n} \, \intpart{\frac{n}{k}} = n \log n + (2\gamma-1) n
    + O \bigl( n^{\frac{1}{3}+\varepsilon} \bigr),
\]
where $\gamma = 0.5772\cdots$ is Euler's constant. The exponent~$\tfrac{1}{3}$
in the error term was gradually improved by several authors, currently to
$\frac{131}{416} = 0.3149\cdots$ by Huxley, see \cite[Chap.~10.2, p.~182]{Cohen:2007}.
Rather than using analytic theory, we will use a counting argument below.
Before proving Theorem~\ref{thm:primes-count}, we need some lemmas.

\begin{lemma} \label{lem:delta-1}
If $n \geq 2$, then
\[
  \Delta_n(x) \in \set{0,1} \quad (x > \sqrt{n}).
\]
\end{lemma}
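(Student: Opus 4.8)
The plan is to show that for $x > \sqrt{n}$ the two step functions $\phi_n(x) = \intpart{\frac{n-1}{x-1}}$ and $\psi_n(x) = \intpart{\frac{n}{x}}$ differ by at most $1$, and that $\phi_n(x) \geq \psi_n(x)$. For the ordering, first I would compare the two real-valued functions inside the floors: one checks that $\frac{n-1}{x-1} \geq \frac{n}{x}$ is equivalent, after clearing the (positive, since $x > \sqrt{n} \geq 1$) denominators, to $x(n-1) \geq n(x-1)$, i.e. to $n \geq x$. So for $\sqrt n < x \le n$ we have $\frac{n-1}{x-1}\ge\frac{n}{x}$ and hence $\phi_n(x)\ge\psi_n(x)$, giving $\Delta_n(x)\ge 0$; for $x > n$ both functions vanish by \eqref{eq:phi-psi}, so $\Delta_n(x)=0$ there as well. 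Thus $\Delta_n(x) \geq 0$ on all of $(\sqrt n,\infty)$, and it remains to prove the upper bound $\Delta_n(x)\le 1$, where by \eqref{eq:delta-0} we may assume $\sqrt n < x < n$.

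For the upper bound the key estimate is on the difference of the real functions: I would bound
\[
  \frac{n-1}{x-1} - \frac{n}{x}
  = \frac{x(n-1) - n(x-1)}{x(x-1)}
  = \frac{n-x}{x(x-1)}.
\]
Since $x > \sqrt n$ we have $x(x-1) > \sqrt n(\sqrt n - 1) = n - \sqrt n$, and since $x < n$ the numerator satisfies $n - x < n$; a cleaner route is to note $n - x < x^2 - x = x(x-1)$ exactly when $x^2 > n$, which holds by hypothesis. Hence $0 \le \frac{n-1}{x-1} - \frac{n}{x} < 1$. Two reals whose difference lies in $[0,1)$ have integer parts differing by at most $1$ (indeed $\intpart{a} - \intpart{b} \le \intpart{a-b} + 1 \le 1$ when $0 \le a - b < 1$), so $\Delta_n(x) = \phi_n(x) - \psi_n(x) \in \{0,1\}$, as claimed.

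The only mild subtlety — and the step I would be most careful about — is the elementary fact that $0 \le a-b < 1$ forces $\intpart a - \intpart b \in \{0,1\}$; this follows by writing $a = \intpart a + \{a\}$, $b = \intpart b + \{b\}$ and observing $\intpart a - \intpart b = (a - b) - (\{a\} - \{b\})$ lies strictly between $-1$ and $2$ and is an integer, while the lower bound $\intpart a \ge \intpart b$ is immediate from $a \ge b$. Everything else is the routine algebraic manipulation of the inequality $x^2 > n$ recorded above, and no appeal to the deeper analytic material (Dirichlet's divisor problem, Voronoi's error term) is needed for this lemma.
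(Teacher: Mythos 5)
Your proposal is correct and is essentially the paper's own argument: the key step in both is the identity $\frac{n-1}{x-1}-\frac{n}{x}=\frac{n-x}{x(x-1)}$ together with the observation that this lies in $[0,1)$ precisely because $x^2>n$ (and $x<n$), after which the conclusion follows from bookkeeping with integer and fractional parts. The paper merely packages the final step by writing $\Delta_n(x)$ as a sum of three terms in $[0,1)$ via $\intpart{y}=y-\fracpart{y}$, whereas you isolate the same fact as a small lemma on floors of reals differing by less than $1$; the content is the same.
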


\begin{proof}
Let $n \geq 2$ be fixed. By \eqref{eq:delta-0}
it remains to consider the range $n > x > \sqrt{n}$.
By using $\intpart{x} = x - \fracpart{x}$,
we easily infer that
\[
  \Delta_n(x) = \phi_n(x) - \psi_n(x)
    = \frac{\frac{n}{x}-1}{x-1} + \fracpart{\frac{n}{x}}
    - \fracpart{\frac{n-1}{x-1}} \in \set{0,1},
\]
because all summands of the right-hand side lie in the interval $[0,1)$.
\end{proof}

\begin{lemma} \label{lem:delta-2}
If $n \geq 2$ and $X > \sqrt{n}$ where $\phi_n(X) \leq 1$, then
\[
  \Delta_n(x) = 0 \quad (x \geq X).
\]
\end{lemma}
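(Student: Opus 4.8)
The plan is to combine the monotonicity of $\phi_n$ with Lemma~\ref{lem:delta-1}. First I would note that $x \mapsto \frac{n-1}{x-1}$ is strictly decreasing on $(1,\infty)$, hence $\phi_n$ is non-increasing there; so the hypothesis $\phi_n(X) \le 1$ yields $\phi_n(x) \le \phi_n(X) \le 1$, and in particular $\phi_n(x) \in \set{0,1}$, for every $x \ge X$. Since $X > \sqrt{n}$, each such $x$ lies in $(\sqrt{n},\infty)$, so Lemma~\ref{lem:delta-1} applies and gives $\Delta_n(x) = \phi_n(x) - \psi_n(x) \in \set{0,1}$.

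Second, I would use the elementary equivalence $\phi_n(x) \ge 1 \iff x \le n \iff \psi_n(x) \ge 1$, valid for $x > 1$: for $1 < x \le n$ both $\frac{n-1}{x-1} \ge 1$ and $\frac{n}{x} \ge 1$, while for $x > n$ one has $\phi_n(x) = \psi_n(x) = 0$ by \eqref{eq:phi-psi}. Now fix $x \ge X$ and split on the value of $\phi_n(x) \in \set{0,1}$. If $\phi_n(x) = 0$, the equivalence forces $\psi_n(x) = 0$, so $\Delta_n(x) = 0$. If $\phi_n(x) = 1$, the equivalence gives $\psi_n(x) \ge 1$, whence $\Delta_n(x) = 1 - \psi_n(x) \le 0$; together with $\Delta_n(x) \in \set{0,1}$ from the first step this forces $\Delta_n(x) = 0$. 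This exhausts the cases and proves the lemma.

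I do not anticipate a real obstacle here; the only points requiring a little care are that $\phi_n$ itself (not just the real-valued function inside the floor) is non-increasing, so that the bound $\phi_n(X) \le 1$ genuinely propagates along the whole ray $[X,\infty)$, and that the restriction $X > \sqrt{n}$ is precisely what licenses the use of Lemma~\ref{lem:delta-1} on all of $[X,\infty)$ rather than at a single point. If one prefers to avoid invoking the equivalence in the case $\phi_n(x) = 1$, the alternative is to observe that $\psi_n(x) = 0$ forces $n/x < 1$, hence $x > n$ and so $\phi_n(x) = 0$, contradicting $\phi_n(x) = 1$.
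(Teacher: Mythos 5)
Your proof is correct and follows essentially the same route as the paper: both arguments propagate the hypothesis $\phi_n(X)\leq 1$ along $[X,\infty)$ (via monotonicity of $\phi_n$), invoke Lemma~\ref{lem:delta-1} for $\Delta_n(x)\in\set{0,1}$, and then settle the two cases $\phi_n = 0$ (so $x>n$, giving $\psi_n=0$) and $\phi_n = 1$ (so $x\leq n$, forcing $\psi_n = 1$). The only difference is organizational---you case-split pointwise at each $x\geq X$, while the paper cases on $\phi_n(X)$ and works on the interval $[X,n]$---which does not change the substance.
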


\begin{proof}
If $\phi_n(X) = 0$, we must have $X > n$, and we are done by \eqref{eq:phi-psi}
and \eqref{eq:delta-0}.
So it remains the case $\phi_n(X) = 1$ and $X \leq n$. By \eqref{eq:phi-psi}
we conclude that $\phi_n(x) = 1$ for all $x \in [X,n]$.
Since $\phi_n(x) \geq \psi_n(x)$ for $x \geq X$ by Lemma~\ref{lem:delta-1},
it also follows that $\psi_n(x) = 1$, and so $\Delta_n(x) = 0$, for all $x \in [X,n]$.
Together with \eqref{eq:delta-0} this gives the result.
\end{proof}

\begin{lemma} \label{lem:delta-sp}
If $n \geq 2$ and $p > \sqrt{n}$ is a prime, then
\[
  s_p(n) \geq p
  \quad \iff \quad
  \Delta_n(p) = 1.
\]
\end{lemma}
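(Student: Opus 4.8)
The plan is to exploit the hypothesis $p > \sqrt{n}$ to control the base-$p$ expansion of $n$, and then to reduce both sides of the claimed equivalence to a single floor expression. Since $p > \sqrt{n}$ gives $p^2 > n$, the integer $n$ has at most two base-$p$ digits, so we may write $n = a_0 + a_1 p$ with $0 \le a_0, a_1 \le p-1$, where $a_1 = \intpart{n/p} = \psi_n(p)$ and $a_0 = n - a_1 p$, and hence $s_p(n) = a_0 + a_1$.

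Next I would rewrite the numerator appearing in $\phi_n(p)$ in terms of these digits. From the identity $n - 1 = (a_0 + a_1 - 1) + a_1 (p-1)$ and the fact that $a_1$ is an integer, one gets
\[
  \phi_n(p) = \intpart{\frac{n-1}{p-1}} = a_1 + \intpart{\frac{a_0 + a_1 - 1}{p-1}},
\]
and therefore
\[
  \Delta_n(p) = \phi_n(p) - \psi_n(p) = \intpart{\frac{s_p(n) - 1}{p-1}}.
\]
One should note in passing that $a_0 + a_1 \ge 1$ for $n \ge 2$, so the inner floor is never $-1$; the only borderline instance is $n = p$, where it equals $0$, matching $\Delta_p(p) = 0$.

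It then remains to read off the value of this floor. Since $0 \le a_0, a_1 \le p-1$, we have $1 \le s_p(n) \le 2(p-1)$, hence $0 \le s_p(n) - 1 \le 2p - 3 < 2(p-1)$, so $\Delta_n(p) \in \set{0,1}$ — consistent with Lemma~\ref{lem:delta-1}, though here it falls out of the computation directly. Moreover $\intpart{(s_p(n)-1)/(p-1)} = 1$ exactly when $s_p(n) - 1 \ge p-1$, that is, when $s_p(n) \ge p$, and otherwise it is $0$. This is precisely the asserted equivalence.

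I do not expect a genuine obstacle here: the argument is a short direct computation once the two-digit structure of $n$ is in hand. The only points requiring a little care are the edge behaviour of the inner floor (the case $n = p$) and checking that the bound $s_p(n) \le 2(p-1)$ together with the strict inequality $s_p(n) - 1 < 2(p-1)$ keeps $\Delta_n(p)$ from reaching $2$; the extreme case $n = p^2 - 1$, where $s_p(n) = 2(p-1)$, is the natural one to verify.
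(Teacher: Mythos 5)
Your proof is correct, and while it uses the same basic ingredient as the paper's proof --- the two-digit expansion $n = a_0 + a_1 p$ with $a_1 = \intpart{n/p}$, available because $p^2 > n$ --- it packages the computation differently. The paper first disposes of $p \geq n$ separately via \eqref{eq:sp-less-p} and \eqref{eq:delta-0}, then invokes Lemma~\ref{lem:delta-1} to translate $\Delta_n(p)=1$ into the inequality $\intpart{\frac{n-1}{p-1}} > \intpart{\frac{n}{p}}$, which it establishes through the chain $s_p(n) = n - (p-1)\intpart{\frac{n}{p}} \geq p \iff \frac{n-1}{p-1} \geq \intpart{\frac{n}{p}}+1$. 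You instead derive the exact identity $\Delta_n(p) = \intpart{\frac{s_p(n)-1}{p-1}}$, from which everything reads off at once: the case $p \geq n$ needs no separate treatment, the containment $\Delta_n(p) \in \set{0,1}$ at primes $p > \sqrt{n}$ falls out of the bound $1 \leq s_p(n) \leq 2(p-1)$ rather than being imported from Lemma~\ref{lem:delta-1}, and the equivalence with $s_p(n) \geq p$ is immediate. Your edge checks ($n=p$ and $n=p^2-1$) are the right ones and are handled correctly. The trade-off: the paper's argument is marginally shorter given that Lemma~\ref{lem:delta-1} is already proved and needed elsewhere, while yours is self-contained and produces a slightly stronger local statement (a closed formula for $\Delta_n(p)$ at primes exceeding $\sqrt{n}$) at no extra cost.
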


\begin{proof}
Let $n \geq 2$ be fixed.
If $p \geq n$, then we have $s_p(n) < p$ and $\Delta_n(p) = 0$
by \eqref{eq:sp-less-p} and \eqref{eq:delta-0}, respectively,
and we are done. It remains the range $n > p > \sqrt{n}$.
In view of Lemma~\ref{lem:delta-1}, we have to show that
\begin{equation} \label{eq:sp-intpart}
  s_p(n) \geq p
  \quad \iff \quad
  \intpart{\frac{n-1}{p-1}} > \intpart{\frac{n}{p}}
\end{equation}
for the prime $p$ in that range. Thus, we can write
\[
  n = a_0 + a_1 \, p \quad \text{and} \quad s_p(n) = a_0 + a_1,
\]
where $a_1 = \intpart{\dfrac{n}{p}} \geq 1$. Substituting the $p$-adic digits leads to
\[
  s_p(n) = n - (p-1) \intpart{\frac{n}{p}}.
\]
If $s_p(n) \geq p$, then we deduce the following steps:
\begin{align*}
  n - 1 - (p-1) \intpart{\frac{n}{p}} &\geq \, p - 1, \\
  \frac{n-1}{p-1} &\geq \intpart{\frac{n}{p}} + 1, \\
  \intpart{\frac{n-1}{p-1}} &> \intpart{\frac{n}{p}}.
\end{align*}
Since the statements above also hold in reverse order, \eqref{eq:sp-intpart} follows.
\end{proof}

\begin{lemma} \label{lem:delta-sum}
If $n \geq 2$, then
\[
  \sum_{k \, > \, \sqrt{n}} \Delta_n(k) < \sqrt{n}.
\]
\end{lemma}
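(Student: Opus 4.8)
The plan is to evaluate the sum in closed form and then estimate. First I would cut the summation down to a genuinely finite range: by \eqref{eq:delta-0} one has $\Delta_n(k)=0$ for all integers $k\ge n$, so writing $m:=\intpart{\sqrt n}+1$ for the least integer exceeding $\sqrt n$ (and noting $m\le n$ for $n\ge 2$),
\[
  \sum_{k\,>\,\sqrt n}\Delta_n(k)=\sum_{k=m}^{n}\Delta_n(k)
  =\sum_{k=m}^{n}\intpart{\frac{n-1}{k-1}}-\sum_{k=m}^{n}\intpart{\frac{n}{k}} .
\]

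Next I would telescope. Reindexing the first sum by $k\mapsto k-1$ and splitting off its lowest term gives $\sum_{k=m}^{n}\intpart{\frac{n-1}{k-1}}=\sum_{k=m-1}^{n-1}\intpart{\frac{n-1}{k}}=\intpart{\frac{n-1}{m-1}}+\sum_{k=m}^{n}\intpart{\frac{n-1}{k}}$, where the upper limit may be raised from $n-1$ to $n$ since $\intpart{\frac{n-1}{n}}=0$. Subtracting the second sum and invoking the elementary fact that $\intpart{\frac{n}{k}}-\intpart{\frac{n-1}{k}}$ equals $1$ when $k\mid n$ and $0$ otherwise, I reach the closed form
\[
  \sum_{k\,>\,\sqrt n}\Delta_n(k)=\intpart{\frac{n-1}{m-1}}-\#\set{d:\, d\mid n,\ d\ge m} .
\]

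To conclude, put $t:=\intpart{\sqrt n}=m-1$. Since $n\ge 2$, the integer $n$ is itself a divisor of $n$ with $n\ge t+1=m$, so the divisor count above is at least $1$. Moreover $n<(t+1)^2$ forces $\frac{n-1}{t}<t+2$, hence $\intpart{\frac{n-1}{t}}\le t+1$, and therefore the sum is at most $t\le\sqrt n$. If $n$ is not a perfect square this already yields the strict inequality, because then $t<\sqrt n$. If $n=t^2$ (so $t\ge 2$), then $\frac{n-1}{t}=t-\frac1t<t$ gives the sharper bound $\intpart{\frac{n-1}{t}}\le t-1$, whence the sum is at most $t-1<t=\sqrt n$. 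Either way $\sum_{k>\sqrt n}\Delta_n(k)<\sqrt n$.

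The step that requires the most care is the exact evaluation: one must keep the shifted ranges $[m-1,n-1]$ and $[m,n]$ aligned so that everything cancels cleanly against the divisor indicator, leaving precisely the boundary term $\intpart{\frac{n-1}{m-1}}$ and the count of large divisors of $n$. Once that identity is in hand, the bound $<\sqrt n$ is immediate from $n<(\intpart{\sqrt n}+1)^2$ together with the trivial divisor $d=n$; the only remaining subtlety is extracting the strict inequality in the perfect-square case, which is handled by the slightly better estimate for $\intpart{\frac{n-1}{t}}$ when $n=t^2$.
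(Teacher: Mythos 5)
Your proof is correct, and it takes a genuinely different route from the paper. The paper argues geometrically: it partitions $\bigl(\sqrt{n}, n\bigr]$ into the level sets (steps) of $\phi_n$, shows that on each step the event $\Delta_n(k)=1$ can occur at most once (and never on the step of height $1$, via Lemma~\ref{lem:delta-2}), so the sum is at most $\ell-1$ with $\ell=\intpart{\frac{n-1}{\intpart{\sqrt{n}\,}}}$, and then checks $\ell-1<\sqrt{n}$. You instead truncate the sum at $k=n$ using \eqref{eq:delta-0} and evaluate it exactly by telescoping together with the indicator identity $\intpart{\frac{n}{k}}-\intpart{\frac{n-1}{k}}=1$ iff $k\mid n$, arriving at the closed form $\sum_{k>\sqrt{n}}\Delta_n(k)=\intpart{\frac{n-1}{\intpart{\sqrt{n}\,}}}-\#\set{d\mid n:\ d>\sqrt{n}}$ (I checked the reindexing and the small-case values; it is right). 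Since $d=n$ always counts, your identity actually refines the paper's inequality $\sum_k\Delta_n(k)\leq\ell-1$, and your endgame ($n<(\intpart{\sqrt{n}}+1)^2$, with the sharper estimate when $n$ is a perfect square) is essentially the same arithmetic as the paper's final step. What your approach buys is an exact formula linking the sum to the number of divisors of $n$ exceeding $\sqrt{n}$ --- pleasantly in the spirit of the Dirichlet divisor problem remark in Section~\ref{sec:step-func} --- and it dispenses with Lemmas~\ref{lem:delta-1} and~\ref{lem:delta-2} entirely; what the paper's counting argument buys is the geometric picture of the graded hyperbola matching Figure~\ref{fig:graph2}, at the cost of only an upper bound rather than an identity.
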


\begin{proof}
Let $n \geq 2$ be fixed. Set $\IT_n := \bigl[ \intpart{\sqrt{n}\,}+1, n \bigr]$
and $\IT^*_n := \IT_n \, \cap \, \ZZ$, where both sets are not empty.
Considering Lemma~\ref{lem:delta-1} and \eqref{eq:delta-0},
we have to count the events when $\Delta_n(k) = 1$ for $k \in \IT^*_n$.
The images $\phi_n(\IT_n)$ and $\psi_n(\IT_n)$ describe both a graded hyperbola
(see Figure~\ref{fig:graph2}),
being piecewise constant and divided into decreasing steps.
From now on, we are interested in the properties of $\phi_n(x)$.
For $x \in \IT_n$  we call $h = \phi_n(x)$
the height of the corresponding step in the interval
$\IT_n(h) := \phi_n^{-1}(h) \cap \IT_n$.

Viewing the function $\phi_n(x)$ on the interval $\IT_n$,
we observe the steps of decreasing heights
\begin{equation} \label{eq:step-height}
  h = \ell, \ell-1, \dots, 2, 1,
\end{equation}
where the heights are bounded by the values of $\phi_n(x)$ on the boundary of
$\IT_n$, namely,
\begin{equation} \label{eq:step-ell}
  \ell := \phi_n\bigl( \intpart{\sqrt{n}\,}+1 \bigr)
    = \intpart{\frac{n-1}{\intpart{\sqrt{n}\,}}} \geq 1
  \quad \text{and} \quad
  \phi_n(n) = 1.
\end{equation}
Hence, we have a decomposition of $\IT_n$ into the disjoint intervals
$\IT_n(1), \dots, \IT_n(\ell)$.

Now fix a height $h \in \{1,\dots,\ell\}$.
Set $\IT^*_n(h) := \IT_n(h) \, \cap \, \ZZ$.
It turns out that on the interval $\IT_n(h)$
the event $\Delta_n(k) = 1$ can at most happen once.
More precisely, $k \in \IT^*_n(h)$ must be the greatest possible integer
(see gray areas in Figure~\ref{fig:graph2}).
Assume to the contrary that there exist integers $k, k' \in \IT^*_n(h)$
satisfying $k < k'$ and $\Delta_n(k) = 1$. By definition we have
$h = \phi_n(k) = \phi_n(k')$.
Thus, it also follows, since $\phi_n(x)$ is constant on the interval $\IT_n(h)$,
that $h = \phi_n(k) = \phi_n(k+1)$, where $k+1 \leq k'$.
Putting all together, we then obtain that
\[
  \underbrace{\intpart{\frac{n-1}{k}}}_{\phi_n(k+1)}
    = \underbrace{\intpart{\frac{n-1}{k-1}}
    > \intpart{\biggl.\frac{n}{k}\biggr.}}_{\Delta_n(k) = 1}
    = \intpart{\frac{n-1}{k} + \frac{1}{k}},
\]
giving a contradiction.
\pagebreak

As a consequence, we have now to count the intervals $\IT_n(h)$ or rather the
steps of different heights $h$.
In total, there are $\ell$~such ones by \eqref{eq:step-height}.
Next we show that the step of height $h=1$ has to be excluded from counting.
Indeed, this follows by Lemma~\ref{lem:delta-2}, since for any
$k \in \IT^*_n$ with $\phi_n(k) \leq 1$, we always have $\Delta_n(k) = 0$.

We finally deduce that
\[
  \sum_{k \, > \, \sqrt{n}} \Delta_n(k)
  = \sum_{k \, \in \, \IT^*_n} \Delta_n(k) \leq \ell - 1.
\]
It remains to show that $\ell - 1 < \sqrt{n}$. By \eqref{eq:step-ell} this turns into
\[
  \intpart{\frac{n-1}{\intpart{\sqrt{n}\,}}} < \sqrt{n} + 1,
\]
which holds by the stricter inequality
\[
  n < \intpart{\sqrt{n}} (\sqrt{n} + 1) + 1
  = n + (\sqrt{n} + 1)(\underbrace{1 - \fracpart{\sqrt{n}}}_{> \, 0}),
  \vspace*{-1em}
\]
implying the result.
\end{proof}

\begin{proof}[Proof of Theorem~\ref{thm:primes-count}]
Let $n \geq 2$. First we show that
\begin{equation} \label{eq:primes-sqrt}
  \omega( \pp_n^+ ) = \sum_{\substack{p \, > \, \sqrt{n}\\
    \intpart{\frac{n-1}{p-1}} \, > \, \intpart{\frac{n}{p}}}} \!\!\! 1
    < \sqrt{n}.
\end{equation}
By combining Lemmas~\ref{lem:delta-1}, \ref{lem:delta-sp}, and~\ref{lem:delta-sum},
we deduce that
\[
  \omega( \pp_n^+ )
  = \sum_{\substack{p \, > \, \sqrt{n}\\
    s_p(n) \, \geq \, p}} \! 1
  = \sum_{\substack{p \, > \, \sqrt{n}\\
    \Delta_n(p) \, = \, 1}} \! 1
  \leq \sum_{k \, > \, \sqrt{n}} \Delta_n(k)
  < \sqrt{n}.
\]
Rewriting the condition $\Delta_n(p) = 1$ as in \eqref{eq:sp-intpart} finally
yields \eqref{eq:primes-sqrt}.

Next we use the straightforward estimate
\[
  \omega( \pp_n^- ) \leq \pi(\sqrt{n}).
\]
By \cite[Cor.~2, p.~69]{Rosser&Schoenfeld:1962} we have
\[
  \pi(x) < \frac{5}{4} \frac{x}{\log x} \quad (x > 1, \, \intpart{x} \neq 113)
\]
with exceptions at $\intpart{x} = 113$. More precisely, we have $\pi(113) = 30$,
while $\frac{5}{4}\frac{113}{\log 113} = 29.8\cdots$.
Since $\omega( \pp_{113^2}^- ) = 19$, we infer that
\[
  \omega( \pp_n^- ) < \frac{5}{2} \frac{\sqrt{n}}{\log n}
\]
holds for all $n \geq 2$, finishing the proof.
\end{proof}


\section{\texorpdfstring{$p$-adic}{p-adic} valuation of polynomials} \label{sec:denom-poly}

Let $\QQ_p$ be the field of $p$-adic numbers. Define $\ord_p(s)$ as the
$p$-adic valuation of $s \in \QQ_p$.
The ultrametric absolute value $\norm{\cdot}_p$ is defined by
$\norm{s}_p := p^{-\ord_p(s)}$ on $\QQ_p$.
Let $\norm{\cdot}_\infty$ be the usual norm on $\QQ_\infty = \RR$.

These definitions can be uniquely extended to a nonzero polynomial
\[
  f(x) = \sum_{k=0}^{r} c_k \, x^k \quad (f \in \QQ[x] \backslash \set{0}),
\]
where $r = \deg f$. We explicitly omit the case $f=0$ for simplicity in the following.
Define
\begin{align}
  \ord_p(f)  &:= \min_k \ord_p( c_k ), \label{eq:ordp-func} \\
  \norm{f}_p &:= p^{-\ord_p(f)} = \max_k \norm{c_k}_p \!, \nonumber \\
  \norm{f}_\infty &:= \cont(f), \nonumber
\end{align}
where $\cont(\cdot)$ gives the unsigned content of a polynomial,
see \cite[Chap.~5.2, p.~233]{Robert:2000} and \cite[Chap.~2.1, p.~49]{Prasolov:2010}.
The product formula then states that
\[
  \prod_{p \, \in \, \PP \, \cup \, \set{\infty}} \!\!\norm{f}_p = 1,
\]
including the classical case $f \in \QQ^\times$ as well.
It also follows by definition that
\begin{equation} \label{eq:cont-prod}
  \cont(f) = \prod_{p \, \in \, \PP} p^{\ord_p(f)}.
\end{equation}

Before giving the proof of Theorem~\ref{thm:primes-denom}, we have two lemmas.
To avoid ambiguity, e.g., between $B_n(x)$ and $B_n$, we explicitly write
$f(x)$ instead of $f$ below.

\begin{lemma}[Carlitz~\cite{Carlitz:1961}] \label{lem:Carlitz}
If $d, \ell, n$ are integers and $p$ is a prime such that $0 < d < n$
and $d \mid p-1$, then
\[
  p \mid \binom{n}{\ell d} \quad (0 < \ell d < n) \quad \iff \quad s_p(n) \leq d.
\]
\end{lemma}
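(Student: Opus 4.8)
The plan is to reduce everything to Kummer's theorem on the $p$-adic valuation of binomial coefficients, which states that $\ord_p\binom{a+b}{a}$ equals the number of carries when adding $a$ and $b$ in base $p$. Equivalently, by Legendre's formula, $\ord_p\binom{n}{m} = \frac{s_p(m) + s_p(n-m) - s_p(n)}{p-1}$. So the claim $p \mid \binom{n}{\ell d}$ for \emph{all} admissible multiples $\ell d$ of $d$ should come down to a clean statement about base-$p$ digit sums, and the hypothesis $d \mid p-1$ is exactly what makes the digits of $n$ interact nicely with multiples of $d$.

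First I would fix the prime $p$ and write $d \mid p-1$, say $p - 1 = d e$. The key elementary observation is that if $s_p(n) \leq d$, then already the \emph{leading} considerations force carries: writing $n = a_0 + a_1 p + \dots + a_r p^r$ with $\sum a_k = s_p(n) \le d$, I want to show that for every $m$ with $0 < m < n$ and $d \mid m$, adding $m$ and $n - m$ in base $p$ produces at least one carry, i.e. $\ord_p\binom{n}{m} \ge 1$. The cleanest route is the digit-sum formula: a carry-free addition would mean $s_p(m) + s_p(n-m) = s_p(n) \le d$. I would then argue that $d \mid m$ together with $0 < m$ and carry-freeness (so that the digits of $m$ are a "sub-multiset" of those of $n$) forces $s_p(m) \ge$ something incompatible with $s_p(m) \le s_p(n) \le d$ — here is where $d \mid m$ is used: the smallest positive multiple of $d$ realizable as a carry-free partial digit sum tends to be $d$ itself or larger, and pinning down that $s_p(m) \ge d$ forces $s_p(n-m) = 0$, hence $m = n$, a contradiction with $m < n$. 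That handles the direction $s_p(n) \le d \implies p \mid \binom{n}{\ell d}$.

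For the converse, I would prove the contrapositive: if $s_p(n) > d$, I must exhibit one $\ell$ with $0 < \ell d < n$ and $p \nmid \binom{n}{\ell d}$, i.e. a carry-free splitting. Since $s_p(n) \ge d+1$, I can peel off digits of $n$ (greedily, from the low end) to assemble an integer $m$ all of whose base-$p$ digits are bounded by the corresponding digits of $n$ — guaranteeing no carries in $m + (n-m)$, hence $\ord_p\binom{n}{m}=0$ — and such that $s_p(m)$ hits a prescribed value. Because consecutive partial sums of the digit-peeling change by at most $p - 1 = de$, and I am aiming at the target value $d$ (or any multiple of $d$ that is $< s_p(n)$ and $> 0$), I can land exactly on a multiple of $d$; using $d \mid p-1$ ensures the reachable residues mod $d$ are not obstructed. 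The resulting $m$ is a positive multiple of $d$, is $< n$ since $s_p(m) < s_p(n)$ forces $m < n$ for carry-free sub-numbers, and satisfies $p \nmid \binom{n}{m}$. Writing $m = \ell d$ completes the converse.

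The main obstacle I anticipate is the bookkeeping in the digit-peeling construction for the converse: one must be careful that the greedily assembled $m$ is genuinely carry-free (each digit of $m$ at most the corresponding digit of $n$), that it is strictly between $0$ and $n$, and that one can always adjust it to have digit sum exactly a multiple of $d$ — the step sizes are at most $p-1 = de$, which is a multiple of $d$ only in a weak sense, so I may need to peel single units rather than whole digits near the end to fine-tune $s_p(m)$ onto the target. A clean way to sidestep delicate casework is to choose the target digit sum to be exactly $d$ itself: then $m$ is the unique "largest carry-free number with digit sum $d$ whose digits are dominated by those of $n$," which exists precisely because $s_p(n) > d$, and one checks directly that $d \mid p - 1$ makes $m$ divisible by... — actually here the divisibility $d \mid m$ is \emph{not} automatic, so the honest approach is the adjustable peeling with the $d \mid p-1$ hypothesis used to guarantee the set of attainable $(s_p(m) \bmod d)$ values covers $0$. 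I would present that carefully, but the underlying idea — Kummer/Legendre plus digit-peeling — is robust.
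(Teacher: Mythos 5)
The paper itself gives no proof of this lemma (it is quoted from Carlitz), so your proposal is measured against the standard argument, and your frame --- Legendre/Kummer, i.e.\ $p \nmid \binom{n}{m}$ iff the base-$p$ addition of $m$ and $n-m$ is carry-free, iff the digits of $m$ are dominated by those of $n$ --- is exactly the right one. However, there is a genuine gap: you never state or use the one arithmetic fact that makes both directions close, namely that $d \mid p-1$ gives $p \equiv 1 \pmod{d}$, hence $p^k \equiv 1 \pmod{d}$ and therefore $N \equiv s_p(N) \pmod{d}$ for every integer $N \geq 0$. In the forward direction this congruence is precisely what proves your key claim $s_p(m) \geq d$ for $m = \ell d$: one has $s_p(m) \equiv m \equiv 0 \pmod{d}$ and $s_p(m) > 0$, so $s_p(m) \geq d$, and then carry-freeness with $s_p(m)+s_p(n-m)=s_p(n) \leq d$ forces $m=n$, a contradiction. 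You only assert this step with ``tends to be $d$ itself or larger,'' which is not an argument, and the claim is false without $d \mid p-1$ (take $d=4$, $p=7$, $m=8=(11)_7$, so $s_7(m)=2<d$), so the hypothesis enters exactly here and must be invoked explicitly.

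The same omission derails your converse. Your ``clean'' choice --- a carry-free sub-number $m$ of $n$ (each digit of $m$ at most the corresponding digit of $n$) with digit sum exactly $d$, which exists because $s_p(n) > d$ --- already does everything: it satisfies $0 < m < n$, $p \nmid \binom{n}{m}$ by Kummer, and $d \mid m$ \emph{automatically}, since $m \equiv s_p(m) = d \equiv 0 \pmod{d}$. You explicitly claim the opposite (``the divisibility $d \mid m$ is \emph{not} automatic'') and then retreat to an ``adjustable peeling'' argument whose bookkeeping you acknowledge you have not carried out. As written, neither direction is complete; but the defect is a single missing observation rather than a wrong approach, and once the congruence $N \equiv s_p(N) \pmod{d}$ is inserted, both of your directions become short and correct.
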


\begin{lemma} \label{lem:ordp-bt}
If $n \geq 1$ and $p$ is a prime, then
\[
  \ord_p (\BT_n(x)) =
    \left\{
      \begin{array}{rl}
        -1, & \text{if $s_p(n) \geq p$}, \\
        0,  & \text{else}.
      \end{array}
    \right.
\]
\end{lemma}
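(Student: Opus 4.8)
The plan is to compute $\ord_p(\BT_n(x))$ directly from the explicit formula $\BT_n(x) = B_n(x) - B_n = \sum_{k=0}^{n-1} \binom{n}{k} B_k\, x^{n-k}$, using \eqref{eq:ordp-func}, i.e.\ $\ord_p(\BT_n(x)) = \min_{0 \le k \le n-1} \ord_p\!\bigl(\binom{n}{k} B_k\bigr)$. The only negative contributions to this minimum come from the denominators $\dd_k$ of the Bernoulli numbers, which by von~Staudt--Clausen \eqref{eq:bern-denom} are squarefree; hence $\ord_p(B_k) \ge -1$ always, and $\ord_p(B_k) = -1$ precisely when $k = 1$ and $p = 2$, or $k \ge 2$ is even and $p-1 \mid k$. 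So the key point is to decide, for a fixed prime $p$, whether there is \emph{some} index $k$ with $0 < k < n$ for which $\ord_p(\binom{n}{k} B_k) = -1$, i.e.\ $p \mid \dd_k$ while $p \nmid \binom{n}{k}$; if no such $k$ exists the valuation is $0$.

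First I would reduce the search for a bad $k$ to the arithmetic-progression indices $k = (p-1)\ell$ with $\ell \ge 1$ (together with the harmless $p=2$, $k=1$ case, which is subsumed), since these are exactly the $k$ with $\ord_p(B_k) = -1$. For such a $k$, the contribution is $-1$ unless $p \mid \binom{n}{(p-1)\ell}$, which cancels the pole. Now invoke Lemma~\ref{lem:Carlitz} with $d = p-1$: for $0 < (p-1)\ell < n$ we have $p \mid \binom{n}{(p-1)\ell}$ if and only if $s_p(n) \le p-1$, equivalently $s_p(n) < p$. Crucially, the divisibility condition in Carlitz's lemma does \emph{not} depend on $\ell$, only on $d = p-1$ and $n$. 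Therefore: if $s_p(n) < p$, then \emph{every} index $k = (p-1)\ell$ in range has $p \mid \binom{n}{k}$, so every term of $\BT_n(x)$ is $p$-integral and $\ord_p(\BT_n(x)) = 0$ (it is exactly $0$ because the leading term $x^n$ has coefficient $1$, or because $k=0$ is excluded but $k$ with $B_k$ a $p$-adic unit exists, e.g.\ $k = n-1$ or similar — in any case $\ge 0$, and the $x^n$ term forces $\le 0$).

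For the reverse direction, suppose $s_p(n) \ge p$. Then I need to exhibit one index $k = (p-1)\ell$ with $0 < (p-1)\ell < n$ and $p \nmid \binom{n}{k}$, which by Carlitz is automatic from $s_p(n) \ge p > d = p-1$ as long as such a $k$ exists in range. Existence is easy: $s_p(n) \ge p \ge 2$ forces $n \ge p$ (indeed $n > p-1$), and since also $n \equiv s_p(n) \ge p > 0$ is not a multiple-of-$p$ obstruction, we certainly have $n \ge p \ge p-1$, so $\ell = 1$ gives $0 < p-1 < n$ except possibly when $n = p-1$; but $s_p(p-1) = p-1 < p$, contradiction, so $n > p-1$ and $\ell=1$ works. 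Hence the $k=(p-1)$ term contributes $\ord_p(\binom{n}{p-1} B_{p-1}) = 0 + (-1) = -1$, and no term is more negative, so $\ord_p(\BT_n(x)) = -1$.

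The main obstacle is keeping the bookkeeping clean around small primes and edge indices: checking that $\ell = 1$ always lands in the open interval $(0,n)$ when $s_p(n) \ge p$, confirming that the $p=2$/$k=1$ case (where $B_1 = -\tfrac12$) is consistent with the $d = p-1 = 1$ instance of Carlitz, and verifying that when $s_p(n) < p$ there is still \emph{some} coefficient of $\BT_n(x)$ that is a $p$-adic unit so the valuation is exactly $0$ rather than positive — this last point follows since the top coefficient (of $x^n$ in $B_n(x)$, hence of $x^n$ in $\BT_n(x)$ as it has no constant term and $n \ge 1$) equals $1$. None of this requires analytic input; it is a finite case analysis riding on Lemma~\ref{lem:Carlitz} and \eqref{eq:bern-denom}.
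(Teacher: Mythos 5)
Your skeleton is the same as the paper's proof (expand $\BT_n(x)$, use von Staudt--Clausen to get $\ord_p(B_k)\geq -1$, the monic leading term to get $\ord_p(\BT_n(x))\leq 0$, and Carlitz's lemma to decide whether the poles all cancel), but the step where you produce a pole when $s_p(n)\geq p$ misreads Lemma~\ref{lem:Carlitz}. That lemma is a statement about \emph{all} $\ell$ with $0<\ell d<n$ simultaneously; its negation only yields the existence of \emph{some} $\ell$ with $p\nmid\binom{n}{\ell(p-1)}$, not that $\ell=1$ works. Your concluding claim that the $k=p-1$ term contributes $\ord_p\bigl(\binom{n}{p-1}B_{p-1}\bigr)=-1$ is false in general: for $n=7$, $p=3$ one has $s_3(7)=3\geq 3$, yet $\binom{7}{2}=21$ is divisible by $3$, so the $k=2$ pole is cancelled; the surviving pole sits at $k=4$, where $\binom{7}{4}B_4=-\tfrac{7}{6}$, exactly as the existential reading predicts. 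For odd $p$ the repair is one sentence: take any $\ell$ furnished by the existential statement; then $\ell(p-1)$ is even with $p-1\mid\ell(p-1)$, so $\ord_p(B_{\ell(p-1)})=-1$ and the term has valuation $-1$. But as written the step is an error, not mere bookkeeping.

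The second gap is $p=2$ with $n$ even, which you wave off as ``subsumed.'' There $d=1$, so the index $k$ produced by the (corrected, existential) use of Carlitz ranges over all $0<k<n$; if that $k$ is odd and $\geq 3$, then $B_k=0$ and no pole arises, so the argument does not close. You need the parity observation the paper supplies: for even $n$, an odd binomial coefficient $\binom{n}{k}$ with $0<k<n$ can only occur at even $k$ (e.g.\ from $k\binom{n}{k}=n\binom{n-1}{k-1}$, or the paper's step $\binom{n}{2\ell+1}\equiv\binom{n}{2\ell}\tfrac{n-2\ell}{2\ell+1}\equiv 0\pmod 2$ together with $\binom{n}{1}=n$ even), and at even $k$ von Staudt--Clausen does give $\ord_2(B_k)=-1$. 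Your other direction is fine: when $s_p(n)<p$ the universal divisibility in Carlitz covers every relevant index (for $p=2$ it covers all $0<k<n$), and the monic top coefficient pins the valuation to exactly $0$. With these two repairs your argument coincides with the paper's proof rather than offering an alternative route.
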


\begin{proof}
We initially compute $\BT_n(x)$ by \eqref{eq:bern-poly}, \eqref{eq:bern-value},
and~\eqref{eq:def-bt}.

Cases $n=1,2$: We obtain $\BT_1(x) = x$ and $\BT_2(x) = x^2-x$.
Thus, we have $\ord_p(\BT_n(x)) = 0$, while $s_p(n) < p$, for all primes $p$;
showing the result for these cases.

Now let $n \geq 3$. Since $B_k = 0$ for odd $k \geq 3$, we deduce that
\[
  \BT_n(x) = x^n - \frac{n}{2} x^{n-1} +
    \sum_{\substack{k=2\\ 2 \, \mid \, k}}^{n-1} \binom{n}{k} B_k \, x^{n-k}.
\]
Evaluating the coefficients of $\BT_n(x)$ by \eqref{eq:ordp-func},
we show that
\begin{equation} \label{eq:ordp-bt-values}
  \ord_p (\BT_n(x)) \in \set{-1, 0}.
\end{equation}
On the one hand, $\BT_n(x)$ is a monic polynomial implying that
\[
  \ord_p (\BT_n(x)) \leq 0.
\]
On the other hand, we derive that
\begin{equation} \label{eq:ordp-coeff-1}
  e_1 := \ord_p\left( \frac{n}{2} \right) \geq
    \left\{
      \begin{array}{rl}
        -1, & \text{if $p = 2$}, \\
         0, & \text{if $p \geq 3$},
      \end{array}
    \right.
\end{equation}
and for even $k$ with $2 \leq k < n$ that
\begin{equation} \label{eq:ordp-coeff-k}
  e_k := \ord_p \left( \binom{n}{k} B_k \right) \geq
    \left\{
      \begin{array}{rl}
        -1, & \text{if $p-1 \mid k$}, \\
         0, & \text{else},
      \end{array}
    \right.
\end{equation}
since the von Staudt--Clausen theorem in \eqref{eq:bern-denom} reads
\begin{equation} \label{eq:ordp-bern}
  \ord_p ( B_k ) =
    \left\{
      \begin{array}{rl}
        -1,     & \text{if\ } p-1 \mid k, \\
        \geq 0, & \text{else.}
      \end{array}
    \right.
\end{equation}
This all confirms \eqref{eq:ordp-bt-values}. Next we consider the cases
$p \geq 3$ and $p=2$ separately.

Case $p \geq 3$: Since $e_1 \geq 0$ by \eqref{eq:ordp-coeff-1},
it remains to evaluate \eqref{eq:ordp-coeff-k}.
Set $d = p-1 \geq 2$. In view of \eqref{eq:ordp-coeff-1} -- \eqref{eq:ordp-bern},
we use Lemma~\ref{lem:Carlitz} to establish that
\begin{alignat*}{3}
  \ord_p (\BT_n(x)) = 0 \quad &\iff \quad e_{\ell d} \geq 0 \quad & (0 < \ell d < n) \\
    &\iff \quad \ord_p \left( \binom{n}{\ell d} \right) \geq 1 \quad & (0 < \ell d < n) \\
    &\iff \quad s_p(n) \leq d.
\end{alignat*}
Together with \eqref{eq:ordp-bt-values}, this conversely implies that
\begin{equation} \label{eq:ordp-bt-1}
  \ord_p (\BT_n(x)) = -1 \quad \iff \quad s_p(n) \geq p,
\end{equation}
showing the result for $p \geq 3$.

\mbox{Case $p = 2$, $n \geq 3$ odd}:
We have $e_1 = -1$ by \eqref{eq:ordp-coeff-1},
and $s_p(n) \geq p$, since $n \geq 3$.
Thus \eqref{eq:ordp-bt-1} holds for this case.

\mbox{Case $p = 2$, $n \geq 4$ even}:
Since $e_1 \geq 0$ by \eqref{eq:ordp-coeff-1},
we have to evaluate \eqref{eq:ordp-coeff-k} and \eqref{eq:ordp-bern} once again.
In order to apply Lemma~\ref{lem:Carlitz} in the case $p=2$ with $d = p-1 = 1$,
we have to modify some arguments.
Note that $n = \binom{n}{1}$ is even.
Furthermore, if $\binom{n}{2\ell}$ is even, so is $\binom{n}{2\ell+1}$, since
\[
  \binom{n}{2\ell+1} \equiv \binom{n}{2\ell} \frac{n-2\ell}{2\ell+1} \equiv 0 \pmod{2}.
\]
Under the above assumptions we then infer that
\begin{alignat*}{4}
  \ord_p (\BT_n(x)) = 0 \quad &\iff \quad e_{2\ell} \geq 0 \quad && (0 < 2\ell < n) \\
    &\iff \quad \ord_p \left( \binom{n}{2\ell} \right) \geq 1 \quad && (0 < 2\ell < n) \\
    &\iff \quad \ord_p \left( \binom{n}{\ell} \right) \geq 1 \quad && (0 < \ell < n) \\
    &\iff \quad s_p(n) \leq d.
\end{alignat*}
This finally implies \eqref{eq:ordp-bt-1} and the result in that case;
completing the proof.
\end{proof}

\begin{proof}[Proof of Theorem~\ref{thm:primes-denom}]
Using the product formula~\eqref{eq:cont-prod}, we derive from
Lemma~\ref{lem:ordp-bt} that
\begin{equation} \label{eq:cont-prod-bt}
  \cont(\BT_n(x))^{-1}
  = \prod_{p \, \in \, \PP} p^{-\ord_p(\BT_n(x))}
  = \prod_{\substack{
    p \, \in \, \PP \\
    s_p(n) \, \geq \, p}} p.
\end{equation}
Hence, $\cont(\BT_n(x))^{-1}$ is a squarefree integer, giving the denominator,
and by \eqref{eq:primes-prod} we obtain
\[
  \denom(\BT_n(x)) = \cont(\BT_n(x))^{-1} = \pp_n.
\]
Furthermore, we deduce from \eqref{eq:bern-denom} and \eqref{eq:def-bt} that
\[
  \denom(B_n(x)) = \denom(\BT_n(x) + B_n) = \lcm(\pp_n, \dd_n).
\]
Finally, it follows by \eqref{eq:power-sum} that
$\cont(S_n(x)) = \cont(\BT_{n+1}(x)) / (n+1)$, and consequently that
\[
  \denom(S_n(x)) = (n+1) \, \pp_{n+1}. \qedhere
\]
\end{proof}


\section*{Conclusion}

As a result of Lemma~\ref{lem:ordp-bt} and \eqref{eq:cont-prod-bt},
the product~\eqref{eq:primes-prod} of $\pp_n$ is causally induced by the product
formula and arises from the $p$-adic valuation of $\BT_n(x)$.
The bounds given in Theorem~\ref{thm:primes-bound} are self-induced by
properties of $s_p(n)$ as shown by Lemmas~\ref{lem:sp-all-n} and~\ref{lem:sp-even-n}.


\appendix

\setcounter{Table}{0}
\renewcommand{\theTable}{A\arabic{Table}}
\renewcommand{\thefigure}{B\arabic{figure}}


\section{Tables}

\begin{Table} \label{tbl:primes-values}
First values of $\pp_n$ are small, while subsequent values are volatile for
larger indices ($\lambda_n^*$ rounded to $2$ decimal places):

\begin{center}
\begin{tabular}{|*{15}{c|}}
  \hline
  $n$           & 1 & 2 & 3 & 4 & 5 & 6 & 7 & 8 & 9 & 10 & 11 & 12 & 13 & 14 \\ \hline
  $\pp_n$       & 1 & 1 & 2 & 1 & 6 & 2 & 6 & 3 & 10 & 2 & 6 & 2 & 210 & 30 \\ \hline
  $\GP(\pp_n)$  & 1 & 1 & 2 & 1 & 3 & 2 & 3 & 3 & 5 & 2 & 3 & 2 & 7 & 5 \\ \hline
  $\lambda_n^*$ & 2 & 3 & 2 & 5 & 2 & 3.5 & 2.67 & 3 & 2 & 5.5 & 4 & 6.5 & 2 & 3 \\
  \hline
\end{tabular}
\end{center}
\smallskip
\begin{center}
\begin{tabular}{|*{9}{c|}}
  \hline
  $n$           & 100 & 101 & 102 & 103 & 104 & 105 & 106 & 107 \\ \hline
  $\pp_n$       & 1\,326 & 72\,930 & 4\,290 & 30\,030 & 2\,310 & 17\,490 & 330 & 330 \\ \hline
  $\GP(\pp_n)$  & 17 & 17 & 13 & 13 & 11 & 53 & 11 & 11 \\ \hline
  $\lambda_n^*$ & 5.94 & 6 & 7.92 & 8 & 9.55 & 2 & 9.73 & 9.82 \\
  \hline
\end{tabular}
\end{center}
\end{Table}

\medskip

\begin{Table} \label{tbl:primes-values2}
Number of prime factors of $\pp_n = \pp_n^- \cdot \pp_n^+$
compared to $\pi_n = \pi_n^- + \pi_n^+$, where
$\pi_n := \pi\bigl(\frac{n+1}{\lambda_n}\bigr)$
and $\pi_n^- := \pi\bigl(\sqrt{n}\bigr)$:
\begin{center}
\begin{tabular}{|*{11}{c|}}
  \hline
  $n$ & 200 & 201 & 202 & 203 & 204 & 205 & 206 & 207 & 208 & 209 \\ \hline
  $\GP(\pp_n)$ & 67 & 101 & 41 & 41 & 41 & 103 & 23 & 19 & 19 & 53 \\ \hline
  $\omega(\pp_n)$   & 8 & 11 & 10 & 8 & 7 & 9 & 8 & 7 & 6 & 6 \\ \hline
  $\omega(\pp_n^-)$ & 3 & 5 & 5 & 4 & 4 & 6 & 6 & 6 & 5 & 4 \\ \hline
  $\omega(\pp_n^+)$ & 5 & 6 & 5 & 4 & 3 & 3 & 2 & 1 & 1 & 2 \\ \hline
  $\pi_n$   & 19 & 26 & 19 & 26 & 19 & 27 & 19 & 27 & 19 & 27 \\ \hline
  $\pi_n^-$ & 6 & 6 & 6 & 6 & 6 & 6 & 6 & 6 & 6 & 6 \\ \hline
  $\pi_n^+$ & 13 & 20 & 13 & 20 & 13 & 21 & 13 & 21 & 13 & 21 \\
  \hline
\end{tabular}
\end{center}
\end{Table}

\vfill


\section{Figures}

\begin{figure}[htbp]
\begin{center}
\caption{Graph of $\omega( \pp_n^+ )$.}
\vspace*{1ex}
\includegraphics[width=12cm]{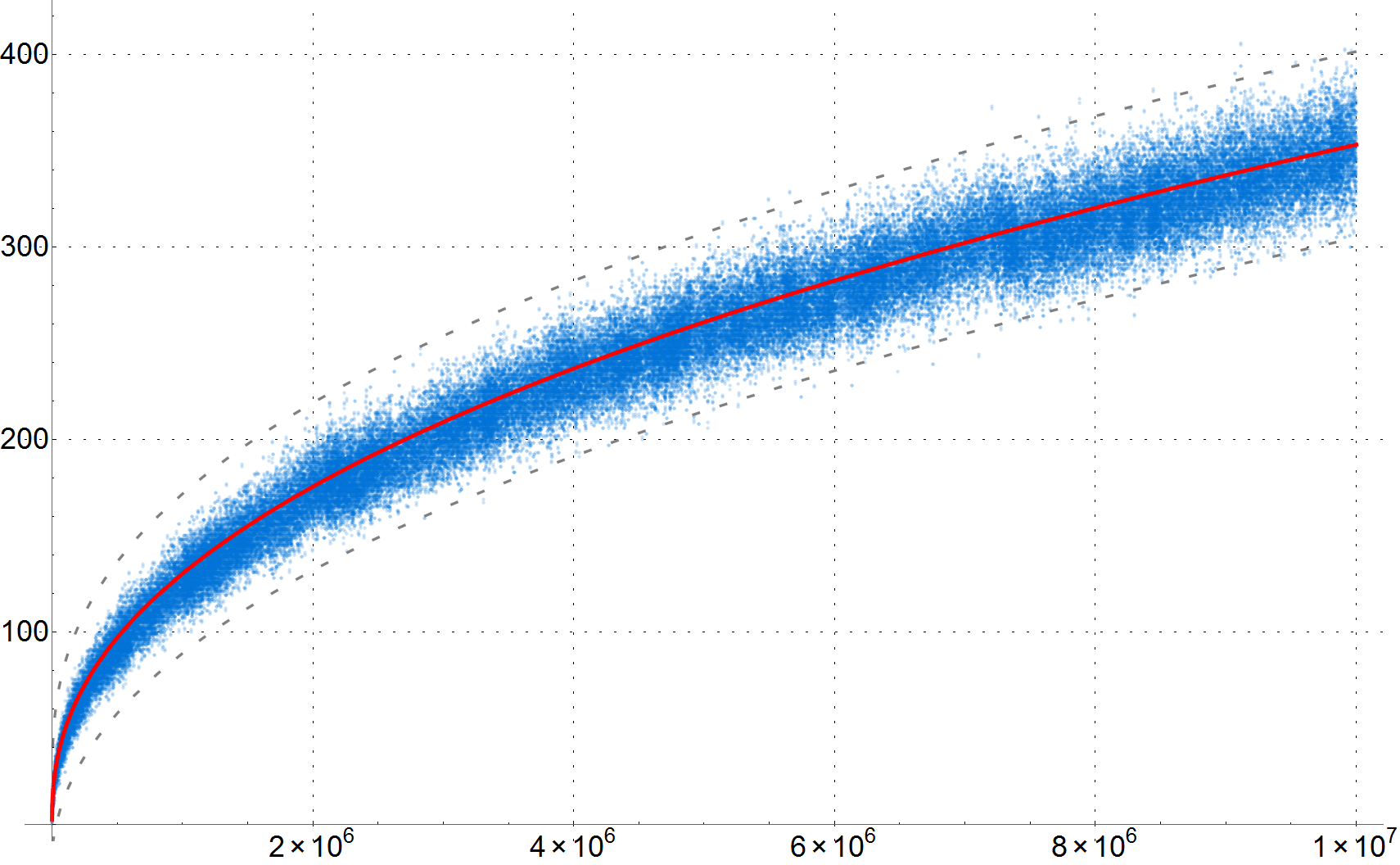}

\begin{minipage}{9cm}
\scriptsize
The red line displays the graph of $\kappa \, \frac{\sqrt{n}}{\log n}$ plotted
with $\kappa=1.8$, the dashed gray trend lines are plotted with offsets
$\pm 3 \log n$.
\end{minipage}
\label{fig:graph1}
\end{center}
\end{figure}

To compute the graph of Figure~\ref{fig:graph1} in a realizable time
and due to the limited display resolution, successive values of $n$ up to $10^7$
were chosen by random step sizes in the range $[100,200]$.
Incorporating the different bounds according to odd and even arguments in
Theorem~\ref{thm:primes-bound}, we computed both values of $\omega( \pp_n^+ )$
and $\omega( \pp_{n+1}^+ )$ for each chosen $n$. To illustrate the frequency,
the values were plotted by blue dots with an opacity of $20\%$.
All computations were performed by \textsl{Mathematica}.

\vfill
\pagebreak

\begin{figure}[htbp]
\begin{center}
\caption{Graph of $\phi_n(x)$ and $\psi_n(x)$ for $n=98$.}
\vspace*{1ex}
\includegraphics[width=12cm]{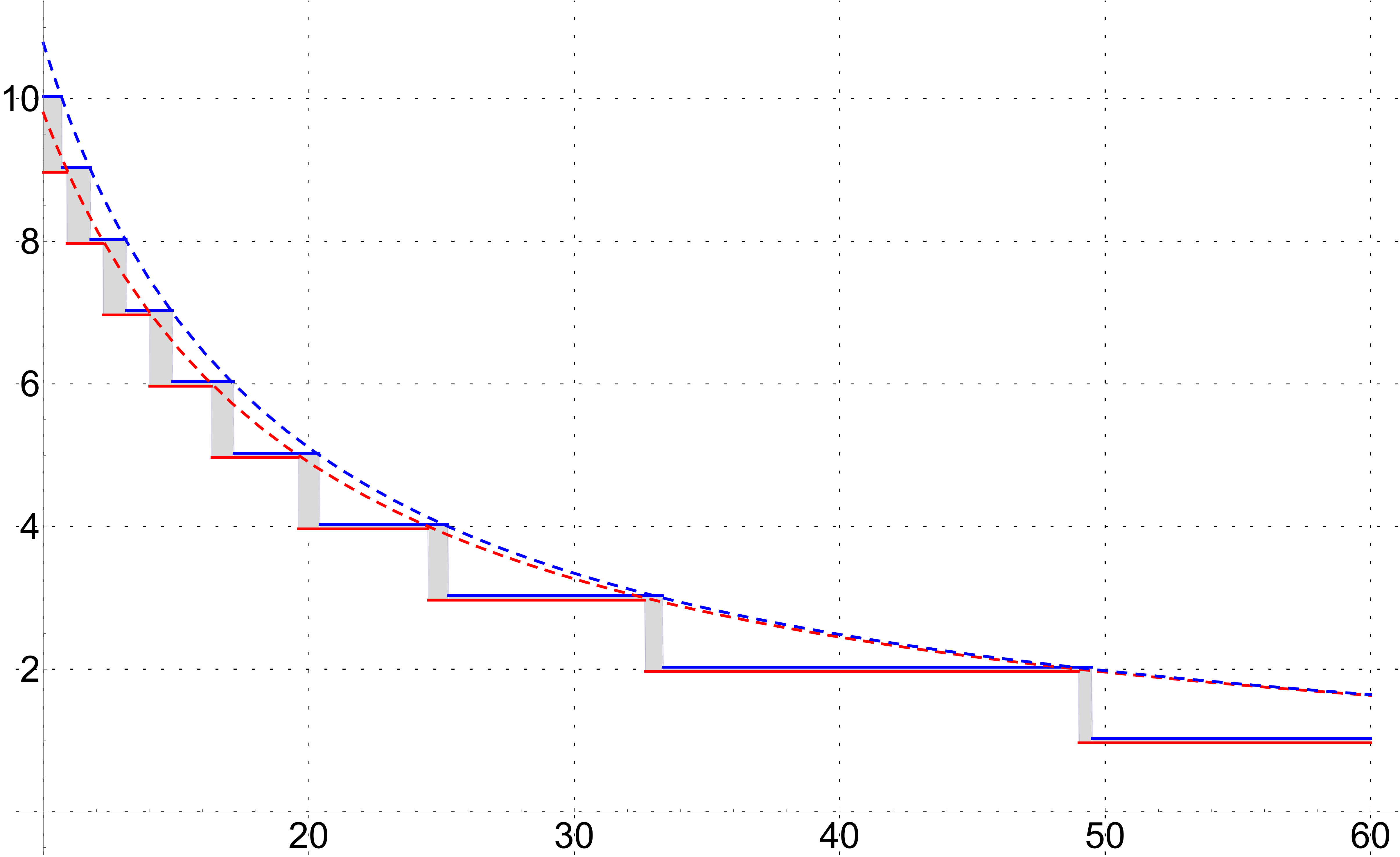}

\vspace*{1ex}
\begin{minipage}{11.8cm}
\scriptsize
The dashed lines display the graphs of the hyperbolas $\frac{n-1}{x-1}$ and
$\frac{n}{x}$ in blue and red, respectively.
The solid lines display the graphs of the step functions
$\phi_n(x) = \bigl[ \frac{n-1}{x-1} \bigr]$
and $\psi_n(x) = \bigl[ \frac{n}{x} \bigr]$ in blue and red
(with an extra spacing between them), respectively.
The gray areas indicate when the difference $\Delta_n(x) = 1$,
otherwise $\Delta_n(x) = 0$.
\end{minipage}
\label{fig:graph2}
\end{center}
\end{figure}


\section*{Acknowledgment}

The author is grateful for the valuable suggestions of the referee
which improved the paper.



\begin{thebibliography}{9}

\bibitem{Carlitz:1961}
L. Carlitz,
\newblock \emph{A divisibility property of the binomial coefficients},
\newblock  Amer. Math. Monthly \textbf{68} (1961), 560--561.

\bibitem{Cohen:2007}
H. Cohen,
\newblock \emph{Number Theory, Volume II: Analytic and Modern Tools},
\newblock GTM \textbf{240}, Springer--Verlag, New York, 2007.

\bibitem{Kellner&Sondow:2017}
B.~C. Kellner, J. Sondow,
\newblock \emph{Power-Sum Denominators},
\newblock  Amer. Math. Monthly, to appear 2017/2018,
\newblock Preprint \arXiv{1705.03857}.

\bibitem{Prasolov:2010}
V.~V. Prasolov,
\newblock \emph{Polynomials},
\newblock 2nd edition, ACM \textbf{11}, Springer--Verlag, Berlin, 2010.

\bibitem{Robert:2000}
A.~M. Robert,
\newblock \emph{A Course in $p$-adic Analysis}, GTM \textbf{198},
\newblock Springer--Verlag, New York, 2000.

\bibitem{Rosser&Schoenfeld:1962}
J. Rosser, L. Schoenfeld,
\newblock \emph{Approximate formulas for some functions of prime numbers},
\newblock Ill. J. Math. \textbf{6} (1962), 64--94.

\end{thebibliography}
\end{document}